\def\newaliasedtheorem#1[#2]#3{
  \newaliascnt{#1@alt}{#2}
  \newtheorem{#1}[#1@alt]{#3}
  \expandafter\newcommand\csname #1@altname\endcsname{#3}
}
\numberwithin{equation}{section}
\newtheoremstyle{slanted}{\topsep}{\topsep}{\slshape}{}{\bfseries}{.}{.5em}{}
\theoremstyle{plain}
\newtheorem{theorem}{Theorem}[section]
\theoremstyle{definition}
\theoremstyle{remark}
\let\altphi\phi
\let\phi\varphi
\let\varphi\altphi
\let\altphi\undefined
\newcommand{\di}{\mathop{}\!\mathrm{d}}
\newcommand{\res}{\mathop{\hbox{\vrule height 7pt width .5pt depth 0pt
\vrule height .5pt width 6pt depth 0pt}}\nolimits}
\DeclareMathOperator{\supp}{supp}
\newcommand{\Ch}{{\sf Ch}}
\DeclareMathOperator{\Lip}{Lip}
\newcommand{\dist}{\mathsf{d}}
\newcommand{\meas}{\mathfrak{m}}
\DeclareMathOperator{\RCD}{RCD}
\DeclareMathOperator{\CD}{CD}
\newfont{\tmpf}{cmsy10 scaled 2500}
\begin{document}
\title{New differential operator and non-collapsed $\RCD$ spaces}
\author{Shouhei Honda
\thanks{Tohoku University, \url{shonda@m.tohoku.ac.jp}, \url{shouhei.honda.e4@tohoku.ac.jp}.}} 
\maketitle
\centerline{\textit{Dedicated to Professor Kenji Fukaya on the occasion of his sixtieth birthday.}}
\begin{abstract}
We show characterizations of non-collapsed compact $\RCD(K, N)$ spaces, which in particular confirm a conjecture of De Philippis-Gigli on the implication from the weakly non-collapsed condition to the non-collapsed one in the compact case.
The key idea is to give the explicit formula of the Laplacian associated to the pull-back Riemannian metric by embedding in $L^2$ via the heat kernel. This seems the first application of geometric flow to the study of $\RCD$ spaces.
\end{abstract}

\tableofcontents

\section{Introduction}
\subsection{Main results}\label{mm}

De Philippis-Gigli introduced in \cite{DePhillippisGigli} two special classes of $\RCD(K, N)$ spaces.
One of them is the notion of \textit{weakly non-collapsed spaces} and the other one is that of \textit{non-collapsed spaces}.
Our main result states that these are essentially same in the compact case.

After the fundamental works of Lott-Villani \cite{LottVillani} and Sturm \cite{Sturm06}, Ambrosio-Gigli-Savar\'e \cite{AmbrosioGigliSavare14} (when $N=\infty$), Gigli \cite{Gigli13} and Erbar-Kuwada-Sturm \cite{ErbarKuwadaSturm} (when $N<\infty$) introduce the notion of $\RCD(K, N)$ \textit{spaces} for metric measure spaces $(X, \dist, \meas)$, which means a synthetic notion of ``$\mathrm{Ric} \ge K$ and $\mathrm{dim}\le N$ with Riemannian structure''.
Typical examples are measured Gromov-Hausdorff limit spaces of Riemannian manifolds with Ricci bounds from below and dimension bounds from above, so-called \textit{Ricci limit spaces}. The $\RCD$ theory gives the striking framework to treat Ricci limit spaces by a synthetic way.

Cheeger-Colding established the fundamental structure theory of Ricci limit spaces \cite{CheegerColding1}.
Thanks to recent quick developments on the study of $\RCD(K, N)$ spaces, the most part of the theory of Ricci limit spaces, including Colding-Naber's result \cite{CN}, are covered by the $\RCD$ theory (see for instance \cite{BrueSemola} by Bru\`e-Semola). In particular whenever $N<\infty$, the essential dimension, denoted by $\mathrm{dim}_{\dist, \meas}(X)$, of any $\RCD(K, N)$ space $(X, \dist, \meas)$ makes sense (c.f. Theorem \ref{th: RCD decomposition}). 

On the other hand in a special class of Ricci limit spaces, so-called \textit{non-collapsed Ricci limit spaces},  finer properties are obtained by Cheeger-Colding. For instance, the Bishop inequality with the rigidity and the almost Reifenberg flatness are justified in this setting. They are not covered by general Ricci limits/$\RCD$ theories.

The properties of non-collapsed $\RCD(K, N)$ spaces introduced in \cite{DePhillippisGigli} cover finer results on non-collapsed Ricci limit spaces as explained above. It is worth pointing out that any convex body is not a non-collapsed Ricci limit space, but it is a non-collapsed $\RCD(K, N)$ space.

Let us give the definitions; 
\begin{itemize}
\item an $\RCD(K, N)$ space $(X, \dist, \meas)$ is \textit{non-collapsed} if $\meas =\mathcal{H}^N$, where $\mathcal{H}^N$ denotes the $N$-dimensional Hausdorff measure;
\item an $\RCD(K, N)$ space $(X, \dist, \meas)$ is \textit{weakly non-collapsed} if $\meas \ll \mathcal{H}^N$.
\end{itemize}
The second definition is equivalent to that $\dim_{\dist, \meas}(X)=N$, which is proved in \cite{DePhillippisGigli}.
Note that some structure results on weakly non-collapsed $\RCD(K, N)$ spaces are obtained in \cite{DePhillippisGigli} and that Kitabeppu \cite{Kit1} provides a similar notion (which is a priori stronger than the weakly non-collapsed condition, but is a priori weaker than the non-collapsed one) and prove similar structure results.

De Philippis-Gigli conjectured that these notions are essentially same. More precisely; 
\begin{conjecture}\label{con:1}
If $(X, \dist, \meas)$ is a weakly non-collapsed $\RCD(K, N)$ space, then $\meas = a\mathcal{H}^N$ for some $a \in (0, \infty)$.
\end{conjecture}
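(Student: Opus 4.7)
My plan is to focus on the compact case, following the strategy announced in the abstract. By the structure theory of \cite{DePhillippisGigli} for weakly non-collapsed $\RCD(K,N)$ spaces, the essential dimension equals $N$, the $N$-dimensional density
\[
\theta(x) := \lim_{r \downarrow 0} \frac{\meas(B_r(x))}{\omega_N r^N}
\]
exists, is finite and positive $\meas$-almost everywhere, and $\meas = \theta\, \mathcal{H}^N$ on $\{\theta<\infty\}$. Conjecture \ref{con:1} thus reduces to showing that $\theta$ is essentially constant on $X$.

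The main tool is the heat kernel embedding $\Phi_t \colon X \to L^2(X,\meas)$ defined by $\Phi_t(x) := p_t(x,\cdot)$, which in the compact case is Lipschitz by standard $\RCD$ bounds on $p_t$. Pulling back the $L^2$ inner product yields, inside Gigli's tangent-module calculus, a measurable Riemannian metric
\[
g_t(v,w) := \int_X (v p_t)(y)\,(w p_t)(y)\, d\meas(y).
\]
The heart of the argument is to derive the announced explicit formula for the associated Laplacian $\Delta_{g_t}$ in terms of $p_t$, $\Delta_\meas$, and the density $\theta$, and then to view $t \mapsto g_t$ as a geometric flow. Using sharp short-time asymptotics such as $(4\pi t)^{N/2} p_t(x,x) \to 1/\theta(x)$ at regular points, I would pass the formula to a limit as $t \downarrow 0$ (or combine it with its $\partial_t$ derivative) to obtain a closed PDE for $\theta$ on $X$, of harmonic or eigenvalue type. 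Compactness together with the maximum principle (or an integration-by-parts argument testing against $\theta$ itself) then forces $\theta$ to be constant $\meas$-almost everywhere, which is the desired conclusion.

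The main obstacle is concentrated in the construction step. First, giving a rigorous meaning to the pullback metric $g_t$, and especially to the operator $\Delta_{g_t}$, inside the non-smooth $\RCD$ calculus is delicate: one must work entirely with Gigli's Sobolev tensors, verify closability, and check that the resulting Laplacian has a tractable action on natural test functions. Second, closing the small-$t$ limit requires a degree of sharpness in heat-kernel asymptotics that is not automatic — typical $\RCD$ estimates carry Hölder-type losses — and it is precisely here that the geometric-flow viewpoint, tracking $g_t$ along $t$ rather than studying only one infinitesimally small value, appears indispensable for closing the argument.
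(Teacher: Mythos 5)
Your setup — the heat-kernel embedding $\Phi_t$, the pullback metric $g_t$, an explicit Laplacian attached to $g_t$, and a $t\downarrow 0^+$ limit — is exactly the skeleton of the paper's argument, and the reduction to showing that the density $\theta$ is constant is correct. The genuine gap is in the decisive final step. You propose to extract ``a closed PDE for $\theta$ of harmonic or eigenvalue type'' and then invoke a maximum principle (or test against $\theta$ itself). No such PDE emerges, and no maximum principle could be applied: $\theta$ is merely an $L^1_{\mathrm{loc}}$ Radon--Nikodym density with no a priori Sobolev regularity, so there is no framework in which it satisfies an elliptic equation. What actually happens is a spectral orthogonality argument. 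One proves the integration-by-parts identity $\int_X\langle g_t,\di\psi\otimes\di f\rangle\di\meas=-\int_X\psi\,\Delta^tf\di\meas$ with $\Delta^tf=\langle \mathrm{Hess}_f,g_t\rangle+\tfrac14\langle\nabla_x\Delta_xp(x,x,2t),\nabla f\rangle$, takes $\psi\equiv1$ and $f$ a nonconstant eigenfunction, normalizes by $\omega_nt^{(n+2)/2}$, and passes to the limit using the $L^p$-convergence $t\meas(B_{\sqrt t}(\cdot))g_t\to c_ng$ of \cite{AHPT} together with the lower volume bound (free from Bishop--Gromov since $n=N$) and the vanishing of the normalized drift term $t^{(n+2)/2}p(x,x,2t)$ in $H^{1,2}$. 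This yields $\int_X\mathrm{tr}(\mathrm{Hess}_f)\,\theta^{-1}\di\meas=0$.

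At this point the indispensable external ingredient, which your proposal never mentions, is Han's theorem (Theorem \ref{thm:hanresult}): because the essential dimension of a weakly non-collapsed space equals $N$, one has $\Delta f=\mathrm{tr}(\mathrm{Hess}_f)$, hence $\lambda\int_Xf\,\theta^{-1}\di\meas=0$ for every eigenfunction with eigenvalue $\lambda>0$. Thus $\theta^{-1}$ is $L^2$-orthogonal to every nonconstant eigenfunction and must be constant. Without the identity $\Delta f=\mathrm{tr}(\mathrm{Hess}_f)$ the limiting formula carries no information about $\theta$ whatsoever, so as written your plan cannot be closed. Two smaller points: the relevant small-time asymptotics are the $L^p$-convergence of the rescaled metrics rather than the pointwise expansion $(4\pi t)^{N/2}p_t(x,x)\to\theta(x)^{-1}$, and the argument uses a single $t\to0^+$ limit rather than any genuine tracking of the flow $t\mapsto g_t$.
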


For the conjecture the only known development is due to Kapovitch-Ketterer \cite{KK} and Han \cite{Han19}. Kapovitch-Ketterer proved that Conjecture \ref{con:1} is true under assuming bounded sectional curvature from above in the sense of Alexandrov (that is, the metric structure is CAT). Han proved that this conjecture is true for smooth Riemannian manifolds with (not necessary smooth) weighted measures.

We are now in a position to introduce a main result of the paper.
\begin{theorem}[Characterization of non-collapsed $\RCD$ spaces]\label{thm:equiv}
Let $(X, \dist, \meas)$ be a compact $\RCD(K, N)$ space with $n:=\mathrm{dim}_{\dist, \meas}(X)$.
Then the following two conditions (1), (2) are equivalent.
\begin{enumerate}
\item The following two conditions hold;
\begin{enumerate}
\item For all eigenfunction $f$ on $X$ of $-\Delta$ we have
\begin{equation}\label{eq:lap}
\Delta f= \mathrm{tr}(\mathrm{Hess}_f) \qquad \text{in $L^2(X, \meas)$.}
\end{equation}
\item There exists $C>0$ such that 
\begin{equation}\label{eq:vol}
\meas (B_r(x)) \ge Cr^n \quad \forall x\in X,\,\,\forall r \in (0,1).
\end{equation}
\end{enumerate}
\item $(X, \dist, \meas)$ is an $\RCD(K, n)$ space with 
\begin{equation}\label{eq:meas}
\meas = \frac{\meas (X)}{\mathcal{H}^n(X)} \mathcal{H}^n.
\end{equation}
\end{enumerate}
\end{theorem}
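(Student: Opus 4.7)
The plan is to treat the two implications separately, relying on the theory of non-collapsed $\RCD$ spaces developed by De Philippis-Gigli for $(2)\Rightarrow(1)$ and on a heat kernel embedding argument for the harder direction $(1)\Rightarrow(2)$.

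For $(2)\Rightarrow(1)$, I would essentially repackage existing results. Under the assumption $\meas\propto\mathcal{H}^n$ and $(X,\dist,\meas)\in\RCD(K,n)$, any function in the domain of the Laplacian admits a Hessian whose trace over the $n$-dimensional regular tangent module equals $\Delta f$ by the non-collapsed structure theory, which yields \eqref{eq:lap}. For \eqref{eq:vol}, the pointwise density $\vartheta_n(x):=\lim_{r\to 0^+}\meas(B_r(x))/(\omega_n r^n)$ exists and is strictly positive everywhere on a compact non-collapsed $\RCD(K,n)$ space and depends lower-semicontinuously on $x$; combined with compactness of $X$ and Bishop-Gromov monotonicity, this yields the uniform lower bound $\meas(B_r(x))\ge Cr^n$ for $r\in(0,1)$.

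For $(1)\Rightarrow(2)$, the main tool would be the heat kernel embedding $\Phi_t\colon X\to L^2(X,\meas)$, $\Phi_t(x):=p_t(x,\plchldr)$, together with the pull-back Riemannian tensor $g_t:=\Phi_t^*\langle\cdot,\cdot\rangle_{L^2}$. Writing $\{\phi_i\}_{i\ge 0}$ for an $L^2$-orthonormal basis of eigenfunctions with $-\Delta\phi_i=\lambda_i\phi_i$, the spectral expansion of $p_t$ yields
\[
\langle d\Phi_t(v),d\Phi_t(w)\rangle_{L^2} \;=\; \sum_{i}e^{-2\lambda_i t}\, d\phi_i(v)\, d\phi_i(w),
\]
and the natural Laplacian $\Delta_{g_t}$ associated to $g_t$ admits a similar expansion involving the quantities $\mathrm{tr}(\mathrm{Hess}_{\phi_i})$. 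Applying \eqref{eq:lap} eigenfunction by eigenfunction replaces each such trace by $\Delta\phi_i=-\lambda_i\phi_i$, collapsing these sums into a closed expression purely in terms of $p_{2t}$ and its derivatives; this should be the ``explicit formula'' advertised in the abstract.

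Given that formula, the plan is then to let $t\to 0^+$. The volume lower bound \eqref{eq:vol} combined with Bishop-Gromov supplies uniform doubling and Poincar\'e inequalities, yielding sharp Gaussian heat kernel bounds and in particular the on-diagonal asymptotics $p_{2t}(x,x)\sim c(x)\,t^{-n/2}$ with $c(x)$ bounded and bounded away from zero. A suitable rescaling of $g_t$ should then converge as $t\to 0^+$ to the intrinsic metric measure structure on the regular set; passing the explicit formula for $\Delta_{g_t}$ to this limit should force $\di\meas/\di\mathcal{H}^n$ to be constant, giving \eqref{eq:meas}, and the improvement from $\RCD(K,N)$ to $\RCD(K,n)$ then follows from the non-collapsed characterization of De Philippis-Gigli. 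The main obstacle I anticipate is the rigorous small-$t$ analysis: $\Phi_t$ lives in an infinite-dimensional Hilbert space and $g_t$ degenerates in the limit, so extracting $n$-dimensional geometric information out of the limit requires delicate quantitative control, and it is here that \eqref{eq:vol} appears indispensable to rule out collapse --- consistent with the geometric flow perspective emphasized in the introduction.
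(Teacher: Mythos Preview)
Your broad strategy for $(1)\Rightarrow(2)$ --- compute the Laplacian associated to the pull-back metric $g_t$ and pass to the limit $t\to 0^+$ --- matches the paper's, but the logical placement of hypothesis \eqref{eq:lap} in your outline is off, and the concrete mechanism that forces the density to be constant is missing. The integration-by-parts identity
\[
\int_X\langle g_t,\,\dist\psi\otimes \dist f\rangle\,\di\meas \;=\; -\int_X\psi\Bigl(\langle \mathrm{Hess}_f,g_t\rangle+\tfrac14\langle \nabla_x\Delta_x p(x,x,2t),\nabla f\rangle\Bigr)\,\di\meas
\]
(Theorem~\ref{thm:green}) is proved \emph{unconditionally} on any compact $\RCD(K,N)$ space; \eqref{eq:lap} is not used to derive the ``explicit formula''. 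One then takes $\psi\equiv 1$ and a single eigenfunction $f$, rescales by $\omega_n t^{(n+2)/2}$, and lets $t\to 0^+$: under \eqref{eq:vol}, Lemma~\ref{lem:co} shows that the drift term vanishes while $\omega_n t^{(n+2)/2}g_t\to c_n\frac{\di\mathcal{H}^n}{\di\meas}\,g$ in $L^p$, leaving $\int_X \mathrm{tr}(\mathrm{Hess}_f)\,\di\mathcal{H}^n=0$. Only \emph{now} does \eqref{eq:lap} enter, converting this into $\int_X f\,\frac{\di\mathcal{H}^n}{\di\meas}\,\di\meas=0$ for every nonconstant eigenfunction $f$, whence $\frac{\di\mathcal{H}^n}{\di\meas}$ is constant. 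Your proposal never isolates this orthogonality argument.

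There is also a genuine gap in your final step: upgrading from $\RCD(K,N)$ to $\RCD(K,n)$ by invoking ``the non-collapsed characterization of De Philippis--Gigli'' is circular. Knowing $\meas=c\,\mathcal{H}^n$ with $\dim_{\dist,\meas}(X)=n$ is precisely the weakly non-collapsed condition with parameter $n$; that this implies $\RCD(K,n)$ is Conjecture~\ref{con:1} itself, not a theorem in \cite{DePhillippisGigli}. The paper closes this directly: using the eigenfunction expansion together with the Hessian bound \eqref{eq:pp}, the identity $\Delta\psi=\mathrm{tr}(\mathrm{Hess}_\psi)$ extends from eigenfunctions to all $\psi\in D(\Delta)$, and then the Bochner inequality with Hessian term combined with $|\mathrm{Hess}_\psi|_{HS}^2\ge (\mathrm{tr}\,\mathrm{Hess}_\psi)^2/n=(\Delta\psi)^2/n$ yields the $\BE(K,n)$ inequality, hence $\RCD(K,n)$.
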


It is easy to understand that this theorem gives a contribution to Conjecture \ref{con:1}.
More precisely combining a result of Han \cite{Han} (c.f. Theorem \ref{thm:hanresult}) with the Bishop-Gromov inequality yields that all compact weakly non-collapsed $\RCD(K, N)$ spaces satisfy (1) in the theorem as $n=N$.
Therefore
\begin{corollary}\label{cor:main}
Conjecture \ref{con:1} is true in the compact case.
\end{corollary}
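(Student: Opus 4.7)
The plan is to derive the Corollary as a direct application of Theorem \ref{thm:equiv}: I would verify that every compact weakly non-collapsed $\RCD(K, N)$ space satisfies the hypothesis (1) of the theorem with $n = N$, and then \eqref{eq:meas} will be exactly the conclusion of Conjecture \ref{con:1} with $a = \meas(X)/\mathcal{H}^N(X)$.

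First, I would invoke the equivalence recalled in Section \ref{mm} (due to De Philippis--Gigli in \cite{DePhillippisGigli}) that $\meas \ll \mathcal{H}^N$ is equivalent to $\dim_{\dist, \meas}(X) = N$; thus the essential dimension $n$ of $X$ coincides with $N$, and it makes sense to check (1a) and (1b) at this value. Condition (1a), namely $\Delta f = \tr(\mathrm{Hess}_f)$ for every eigenfunction of $-\Delta$, should follow directly from Han's result, stated as Theorem \ref{thm:hanresult}, which is formulated precisely for $\RCD(K, N)$ spaces whose essential dimension is maximal. Condition (1b), the $r^n$ volume lower bound \eqref{eq:vol}, will follow from the Bishop--Gromov inequality on compact $\RCD(K, N)$ spaces with $n=N$: the comparison volume behaves like $c\, r^N$ for small $r$, and compactness ensures that the constant $C$ in \eqref{eq:vol} can be chosen uniformly in the base point $x \in X$ at scales $r \in (0,1)$.

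Once (1a) and (1b) are verified, Theorem \ref{thm:equiv} applies and yields \eqref{eq:meas} with $n = N$, i.e.\ $\meas = a\,\mathcal{H}^N$ with $a = \meas(X)/\mathcal{H}^N(X) \in (0, \infty)$, which is exactly the statement of Conjecture \ref{con:1}. The main obstacle is not internal to the Corollary, whose proof is a packaging of three already-established ingredients (Theorem \ref{thm:hanresult}, Bishop--Gromov, and Theorem \ref{thm:equiv}); rather, it lies in the dependence on Theorem \ref{thm:equiv}. Any attempt to remove the compactness hypothesis in the Corollary (where Conjecture \ref{con:1} remains open) would first require extending Theorem \ref{thm:equiv} itself, whose proof relies on the heat-kernel embedding machinery announced in the abstract and is genuinely compact in nature (discreteness of the spectrum of $-\Delta$, uniform heat-kernel bounds, and global integrability of the pull-back metric).
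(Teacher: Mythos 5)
Your proposal is correct and follows exactly the paper's own route: the paper deduces the corollary by noting that weak non-collapsedness gives $\dim_{\dist,\meas}(X)=N$, that Han's result (Theorem \ref{thm:hanresult}) then yields (1a), that Bishop--Gromov yields (1b), and that Theorem \ref{thm:equiv} concludes. The only detail worth making explicit is that $N$ is automatically an integer here since it equals the essential dimension (Theorem \ref{th: RCD decomposition}), which is what Theorem \ref{thm:hanresult} requires.
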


We will also establish other characterization of non-collapsed $\RCD$ spaces. See subsection \ref{4.2}.  Next let us explain how to achieve these results. Roughly speaking it is to take \textit{canonical deformations $g_t$} of the Riemannian metric $g$ via the heat kernel.
\subsection{Key idea; deformation of Riemannian metric via heat kernel}
In order to prove main results the key idea is to use the pull-back Riemannian metrics $g_t:=\Phi_t^*g_{L^2}$ by embeddings $\Phi_t: X \to L^2(X, \meas)$ via the heat kernel $p$ instead of using the original Riemannian metric $g$ of $(X, \dist, \meas)$. The definition of $\Phi_t$ is
\begin{equation}
\Phi_t(x) (y):=p(x, y, t).
\end{equation}
This map is introduced and studied by B\'erard-Besson-Gallot \cite{BerardBessonGallot} for closed manifolds. They proved that for closed manifolds $(M^n, g)$, as $t \to 0^+$
\begin{equation}
\omega_nt^{(n+2)/2}g_t=c_ng-\frac{c_n}{3}\left(\mathrm{Ric}_g-\frac{1}{2}\mathrm{Scal}_gg\right)t +O(t^2),
\end{equation}
where $\mathrm{Ric}_g$ and $\mathrm{Scal}_g$ denote the Ricci and the scalar curvatures respectively, and 
\begin{equation}\label{eq:const}
\omega_n:=\mathcal{L}^n(B_1(0_n)), \quad c_n:=\frac{\omega_n}{(4\pi)^n}\int_{\mathbb{R}^n}|\partial_{x_1}(e^{-|x|^2/4})|^2\di \mathcal{L}^n(x).
\end{equation}
Recently the map $\Phi_t$ is also studied for compact $\RCD(K, N)$ spaces by Ambrosio-Portegies-Tewodrose and the author \cite{AHPT}. In particular $g_t$ is also well-defined in this setting (c.f. Theorem \ref{prop:riemanexist}).

Let us introduce the following new differential operator
\begin{equation}
\Delta^tf:=\langle \mathrm{Hess}_f, g_t\rangle +\frac{1}{4}\langle \nabla_x\Delta_xp(x, x, 2t), \nabla f\rangle.
\end{equation}
This plays a role of the Laplacian associated to $g_t$, in fact, we will prove
\begin{equation}\label{eq:010}
\int_X\langle g_t, \dist \psi\otimes \dist f\rangle\di \meas =-\int_X\psi\Delta^tf\di \meas,
\end{equation}
which is new even for closed manifolds.
See Theorem \ref{thm:green} for the precise statement.
Then under assuming (\ref{eq:vol}), after nomalization, taking the limit $t\to 0^+$ in (\ref{eq:010}) with convergence results given in \cite{AHPT} yields the \textit{metric} integration by parts formula
\begin{equation}\label{eq:b}
\int_X\langle \nabla \psi, \nabla f\rangle\di \mathcal{H}^n =-\int_X\psi \mathrm{tr}(\mathrm{Hess}_f)\di \mathcal{H}^n.
\end{equation}
This allows us to prove (\ref{eq:meas})  by letting $\psi \equiv 1$ if in addition (\ref{eq:lap}) holds because we see that $\frac{\di \mathcal{H}^n}{\di \meas}$ is $L^2(X, \meas)$-orthogonal to $\mathrm{tr} (\mathrm{Hess}_f)$ for all $f$, in particular, $\frac{\di \mathcal{H}^n}{\di \meas}$ is $L^2(X, \meas)$-orthogonal to any nontrivial eigenfunction. This implies that $\frac{\di \mathcal{H}^n}{\di \meas}$ must be a constant function.

Finally let us give few comments. It is well-known that in the smooth setting, there are many geometric flows (e.g. Ricci flow) which are useful to understand the original space.
However for singular spaces there are not so many (e.g. \cite{BK}, \cite{GM}, \cite{KL1} and \cite{KL2}). In general $\RCD$ spaces have very wild singularities (e.g. the singular set may be dense). This paper shows us that such flow approaches are also useful even in the $\RCD$ setting. Geometric applications of the main results can be found in \cite{HM, KM}.
Moreover although we discuss only in the compact case, the author believes that the techniques provided
will be available even in the noncompact case. 

The paper is organized as follows: 

In Section \ref{se:rcdstarkn} we give a quick introduction on $\RCD$ spaces and prove technical results. 
In Section \ref{sec3} we establish (\ref{eq:010}).
In the final section, Section \ref{sec4}, we prove main results stated in subsection \ref{mm} and related results.
It is worth pointing out that subsection \ref{4.2} is written from the point of view of metric geometry.

\smallskip\noindent
\textbf{Acknowledgement.}
The author is grateful to the referees for careful readings and valuable suggestions.
He acknowledges supports of the Grantin-Aid
for Young Scientists (B) 16K17585 and Grant-in-Aid for Scientific Research (B) of 18H01118.

\section{$\RCD (K,N)$ spaces}\label{se:rcdstarkn}
A triple $(X, \dist, \meas)$ is a \textit{metric measure space} if $(X, \dist)$ is a complete separable metric space and $\meas$ is a Borel measure on $X$ with $\supp \meas =X$. 
\subsection{Definition}\label{2.1}
Throughout this paper the parameters $K\in\mathbb{R}$ (lower bound on Ricci curvature) and $N \in [1, \infty)$ (upper bound on dimension) will be kept fixed. 
Instead of giving the original definition of $\RCD(K, N)$ spaces, we introduce an equivalent version for short.
See \cite{ErbarKuwadaSturm}, \cite{AmbrosioMondinoSavare}, \cite{CM} and \cite{AmbrosioGigliSavare13} for the proof of the equivalence and the detail.

Let $(X, \dist, \meas)$ be a metric measure space.
The Cheeger energy
$\Ch:L^2(X,\meas)\to [0,+\infty]$ is a convex and $L^2(X,\meas)$-lower semicontinuous functional defined as follows:
\begin{equation}\label{eq:defchp}
\Ch(f):=\inf\left\{\liminf_{n\to\infty}\frac 12\int_X(\mathrm{Lip}  f_n)^2\di\meas:\ \text{$f_n\in\Lip_b (X,\dist) \cap L^2(X, \meas)$, $\|f_n-f\|_{L^2}\to 0$}\right\}, 
\end{equation}
where $\Lip_b (X,\dist)$ denotes the space of all bounded Lipschitz functions and $\mathrm{Lip} f$ is the slope.

The Sobolev space $H^{1,2}(X,\dist,\meas)$ then coincides with $\{f\in L^2(X, \meas): \Ch(f)<+\infty\}$. When endowed with the norm
$\|f\|_{H^{1,2}}:=(\|f\|_{L^2(X,\meas)}^2+2\Ch(f))^{1/2}$,
this space is Banach, reflexive if $(X,\dist)$ is doubling (see \cite[Cor.7.5]{AmbrosioColomboDiMarino}), and  
separable Hilbert if $\Ch$ is a quadratic form (see \cite[Prop.4.10]{AmbrosioGigliSavare14}). 
According to the terminology introduced in \cite{Gigli1}, we say that $(X,\dist,\meas)$ is infinitesimally Hilbertian if $\Ch$ is a quadratic form.

Let us assume that $(X, \dist, \meas)$ is infinitesimally Hilbertian. Then
for all $f_i \in H^{1, 2}(X, \dist, \meas),$
\begin{equation}
\langle \nabla f_1, \nabla f_2\rangle:=\lim_{\epsilon \to 0}\frac{|\nabla (f_1+\epsilon f_2)|^2-|\nabla f_1|^2}{2\epsilon} \in L^1(X, \meas)
\end{equation}
is well-defined,
where $|\nabla f| \in L^2(X, \meas)$ denotes the minimal relaxed slope of $f \in H^{1, 2}(X, \dist, \meas)$ (see \cite[Sect. $3$ and $4$]{Gigli1}).
  

We can now define a densely
defined operator $\Delta:D(\Delta)\to L^2(X,\meas)$ whose domain consists of all functions $f\in H^{1,2}(X,\dist,\meas)$
satisfying
$$
 \int_X \psi \phi \di \meas=-\int_X \langle \nabla f, \nabla \phi \rangle \di \meas\quad\qquad\forall \phi \in H^{1,2}(X,\dist,\meas)
$$
for some $\psi \in L^2(X,\meas)$. The unique $\psi$ with this property is then denoted by $\Delta f$.

We are now in a position to introduce the $\RCD$ space:
\begin{definition}[$\RCD$ spaces]\label{bedef}
Let $(X, \dist, \meas)$ be a metric measure space, let $K \in \mathbb{R}$ and let $\hat{N} \in [1, \infty]$.
We say that $(X, \dist, \meas)$ is an \textit{$\RCD (K, \hat{N})$ space} if the following hold;
\begin{enumerate}
\item{(Volume growth)} there exist $x \in X$ and $C>1$ such that $\meas (B_r(x)) \le Ce^{Cr^2}$ for all $r \in (0, \infty)$;
\item{(Bochner's inequality)}
for all $f\in D(\Delta)$ with $\Delta f\in H^{1,2}(X,\dist,\meas)$, 
\begin{equation}\label{eq:boch}
\frac 12\int_X |\nabla f|^2\Delta\phi\dist\meas\geq
\int_X\phi\left(\frac{(\Delta f)^2}{\hat{N}}+ \langle \nabla f,\nabla \Delta f\rangle + K|\nabla f|^2\right)\dist\meas 
\end{equation}
for all $\phi\in D(\Delta) \cap L^{\infty}(X, \meas)$ with $\phi\geq 0$ and $\Delta\phi\in L^\infty(X,\meas)$;
\item{(Sobolev-to-Lipschitz property)} any  
$f\in H^{1,2}(X,\dist,\meas)$ with $|\nabla f| \leq 1$ $\meas$-a.e. in $X$ 
has a $1$-Lipschitz representative.
\end{enumerate}
\end{definition}
It is known that for a smooth weighted complete Riemannian manifold $(M^n, g, e^{-\phi}\mathrm{vol}_g) (\phi \in C^{\infty}(M^n))$ it is an $\RCD(K, \hat{N})$ space for $K \in \mathbb{R}$ and $\hat{N} \in (n, \infty]$ if and only if it holds that
\begin{equation}\label{chc}
\mathrm{Ric}_g+\mathrm{Hess}_{\phi}^g-\frac{\dist \phi \otimes \dist \phi}{\hat{N}-n}\ge Kg.
\end{equation}
See \cite[Prop.4.21]{ErbarKuwadaSturm}. In particular if it is an $\RCD(K, n)$ space, then $\phi$ must be a constant function because it is also an $\RCD(K, \hat{N})$ space for all $\hat{N} \in (n, \infty]$, which implies by (\ref{chc}) that $|\dist \phi| \equiv 0$.
Let us denote the heat flow associated to the Cheeger energy by $h_t$. It holds (without curvature assumption) that 
\begin{equation}\label{eq:22}
\|h_tf\|_{L^2}\le \|f\|_{L^2},\qquad \||\nabla f|\|_{L^2}^2\le \frac{\|f\|_{L^2}^2}{2t^2},\qquad \|\Delta h_tf\|_{L^2} \le \frac{\| f\|_{L^2}}{t}.
\end{equation} 
Then one of the crucial properties of the heat flow on $\RCD (K, \infty)$ spaces is
\begin{equation}
h_tf \in \mathrm{Test}F(X, \dist, \meas), \qquad \forall f \in L^2(X, \meas) \cap L^{\infty}(X, \meas),\,\,\forall t \in (0, \infty),
\end{equation}
where
\begin{equation}\label{eq:deftest}
\mathrm{Test}F(X,\dist,\meas):=\left\{f\in\Lip_b(X,\dist)\cap H^{1,2}(X,\dist,\meas):\ \Delta f\in H^{1,2}(X,\dist,\meas)
\right\}.
\end{equation}
See for instance \cite{Gigli} for the crucial role of test functions in the study of $\RCD$ spaces.
Finally we end this subsection by giving the following elementary lemma.
\begin{lemma}\label{lem:density}
Let $(X, \dist, \meas)$ be an $\RCD(K, \infty)$ space and let $f \in D(\Delta)$.
Then there exists a sequence $f_i \in \mathrm{Test}F(X, \dist, \meas)$ such that $\|f_i-f\|_{H^{1, 2}} +\|\Delta f_i - \Delta f\|_{L^2} \to 0$ holds.
\end{lemma}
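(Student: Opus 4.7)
The strategy is to regularize $f$ by the heat flow, combined with an $L^\infty$-truncation. The paper has already recorded that $h_t u \in \mathrm{Test}F$ whenever $u \in L^2 \cap L^\infty$, so once we can reduce to a bounded approximation of $f$ inside $D(\Delta)$, the result will follow by a diagonal argument.

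A first attempt is simply $f_i := h_{1/i} f$: strong continuity of $h_t$ on $L^2$ gives $f_i \to f$ in $H^{1,2}$, and the commutation $\Delta h_{1/i} f = h_{1/i} \Delta f$ (valid since $f \in D(\Delta)$) gives $\Delta f_i \to \Delta f$ in $L^2$; moreover $\Delta f_i \in H^{1,2}$ by writing $\Delta h_{1/i} f = h_{1/(2i)}(h_{1/(2i)} \Delta f)$ and using the $L^2 \to H^{1,2}$ bound in \eqref{eq:22}. The only obstruction to $f_i \in \mathrm{Test}F$ is that $f_i$ need not lie in $\Lip_b(X,\dist)$: boundedness would require $f \in L^\infty$ (via the maximum principle for $h_t$), and Lipschitz regularity would then follow from the Bakry--\'Emery gradient estimate together with the Sobolev-to-Lipschitz property.

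This motivates first approximating $f$ by bounded functions still inside $D(\Delta)$. Pick $\eta \in C^\infty(\mathbb{R})$ with $\eta(s) = s$ for $|s| \le 1$ and $\eta, \eta', \eta''$ bounded, set $\eta_n(s) := n \eta(s/n)$ so that $\|\eta_n'\|_\infty$ is uniformly bounded while $\|\eta_n''\|_\infty = O(1/n)$, and define $g_n := \eta_n(f) \in L^\infty(X, \meas)$. The $H^{1,2}$-chain rule yields $g_n \in H^{1,2}$ with $\nabla g_n = \eta_n'(f) \nabla f$, hence $g_n \to f$ in $H^{1,2}$ by dominated convergence. Testing against $\phi \in H^{1,2} \cap L^\infty$ and using Leibniz to move one derivative yields the Laplacian chain rule
\[
\Delta g_n = \eta_n'(f) \Delta f + \eta_n''(f) |\nabla f|^2,
\]
identifying $g_n \in D(\Delta)$ and, after verification, giving $\Delta g_n \to \Delta f$ in $L^2(X, \meas)$. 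Once this is in place, for each $g_n$ we have $h_{1/k} g_n \in \mathrm{Test}F$ with $h_{1/k} g_n \to g_n$ in $H^{1,2}$ and $\Delta h_{1/k} g_n = h_{1/k} \Delta g_n \to \Delta g_n$ in $L^2$, and a diagonal extraction $f_i := h_{1/k_i} g_{n_i}$ closes the proof.

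The main technical obstacle is justifying the Laplacian chain rule and the $L^2$-convergence $\Delta g_n \to \Delta f$: a priori $|\nabla f|^2 \in L^1(X, \meas)$ only, so $\eta_n''(f) |\nabla f|^2$ sits in $L^1$ rather than $L^2$. The decay $\|\eta_n''\|_\infty = O(1/n)$ is essential for making the bad term negligible, and one may need to precede the truncation by an initial heat-flow mollification $f \rightsquigarrow h_s f$ to upgrade the integrability of $|\nabla f|$ before applying $\eta_n$. Managing the three scales --- preliminary heat-flow time $s$, truncation level $n$, and final semigroup mollification $1/k$ --- along a suitable diagonal is the point requiring the most care.
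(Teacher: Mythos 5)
Your overall strategy (truncate, then mollify by the heat flow) is the right one, but the truncation you chose opens a gap that your own last paragraph acknowledges without closing. The chain rule produces $\Delta\eta_n(f)=\eta_n'(f)\Delta f+\eta_n''(f)|\nabla f|^2$, and since $|\nabla f|^2$ is a priori only in $L^1(X,\meas)$ for $f\in D(\Delta)$, the candidate Laplacian of $g_n=\eta_n(f)$ need not lie in $L^2(X,\meas)$; with the paper's definition of $D(\Delta)$ (which requires an $L^2$ representative), $g_n$ therefore need not belong to $D(\Delta)$ for any fixed $n$, and the subsequent steps ($\Delta h_{1/k}g_n=h_{1/k}\Delta g_n$, etc.) are not meaningful. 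The decay $\|\eta_n''\|_\infty=O(1/n)$ does not help: it shrinks an $L^1$ quantity but does not place it in $L^2$. Your proposed remedy --- a preliminary mollification $f\rightsquigarrow h_sf$ to upgrade $|\nabla f|$ to $L^4$ --- is not justified in the stated generality: the lemma is for arbitrary $\RCD(K,\infty)$ spaces, where the Bakry--\'Emery estimate $|\nabla h_sf|^2\le e^{-2Ks}h_s(|\nabla f|^2)$ only yields $|\nabla h_sf|\in L^4$ if $h_s$ maps $L^1$ into $L^2$, i.e.\ under an ultracontractivity property that is not among the hypotheses. (On the compact $\RCD(K,N)$ spaces where the lemma is actually used, the Gaussian bounds do give this upgrade, but even there the chain rule and the three-parameter diagonal would still need to be carried out in detail.)

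The paper sidesteps all of this by taking the hard truncation $F_L:=(-L)\vee f\wedge L$ and never computing $\Delta F_L$: one has $h_tF_L\in\mathrm{Test}F(X,\dist,\meas)$, the bound (\ref{eq:22}) gives $\|\Delta h_tF_L\|_{L^2}\le\|F_L\|_{L^2}/t\le\|f\|_{L^2}/t$ uniformly in $L$, so $\Delta h_tF_L$ converges only \emph{weakly} in $L^2$ to $\Delta h_tf$ as $L\to\infty$; a diagonal choice $t_i\to0^+$, $L_i\to\infty$ yields $h_{t_i}F_{L_i}\to f$ in $H^{1,2}$ with $\Delta h_{t_i}F_{L_i}\weakto\Delta f$, and Mazur's lemma upgrades the weak convergence of the Laplacians to strong $L^2$ convergence along convex combinations, which remain in $\mathrm{Test}F(X,\dist,\meas)$. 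To repair your argument, either restrict to the compact finite-dimensional setting and fully justify the $L^4$ upgrade and the chain rule there, or replace the smooth truncation by the hard one and substitute this weak-compactness-plus-Mazur device for the chain-rule computation.
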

\begin{proof}
Let $F_L:=(-L) \vee f \wedge L$. Note that $h_tF_L \in \mathrm{Test}F(X, \dist, \meas)$, that $h_tF_L \to h_tf$ in $H^{1, 2}(X, \dist, \meas)$ as $L \to \infty$ for all $t>0$, and that $\Delta h_tF_L$ $L^2$-weakly converges to $\Delta h_t f$ as $L \to \infty$ for all $t>0$, where we used (\ref{eq:22}) (c.f. \cite[Cor.10.4]{AmbrosioHonda}). Since $\Delta h_tf \to \Delta f$ in $L^2(X, \meas)$ as $t \to 0^+$, there exist $L_i \to \infty$ and $t_i \to 0^+$ such that $h_{t_i}F_{L_i} \to f$ in $H^{1, 2}(X, \dist, \meas)$ and that $\Delta h_{t_i}F_{L_i}$ $L^2$-weakly converges to $\Delta f$.
Then applying Mazur's lemma for the sequence $\{\Delta h_{t_i}F_{L_i}\}_i$ yields that for all $m \ge 1$ there exist $N_m \in \mathbb{N}_{\ge m}$ and $\{t_{m, i}\}_{m \le i \le N_m} \subset [0, 1]$ such that $\sum_{i=m}^{N_m}t_{m, i}=1$ and that $\sum_{i=m}^{N_m}t_{m, i}\Delta h_{t_i}F_{L_i} \to \Delta f$ in $L^2(X, \meas)$. It is easy to check that $f_m:=\sum_{i=m}^{N_m}t_{m, i}h_{t_i}F_{L_i}$ satisfies the desired claim.
\end{proof}

\subsection{Heat kernel}
It is well-known that the Bishop-Gromov theorem holds for any $\RCD(K,N)$ space $(X,\dist,\meas)$ (or more generally for $\CD^*(K, N)$ spaces) and that the local Poincar\'e inequality holds for $\RCD(K, \infty)$ spaces (or more generally for $\CD(K, \infty)$ spaces). See \cite[Th.30.11]{Villani}, \cite{VR} and \cite[Th.1]{Rajala}.
Furthermore, it follows from the Sobolev-to-Lipschitz property 
that, on any $\RCD(K,N)$ space $(X,\dist,\meas)$, the intrinsic distance
$$
\dist_{\Ch}(x,y):= \sup \{ |f(x)-f(y)| \, : \, f \in H^{1,2}(X,\dist,\meas) \cap C_b(X, \dist), \, |\nabla f| \le 1 \}
$$
associated to the Cheeger energy $\Ch$ coincides with the original distance $\dist$. Consequently, applying \cite[Prop.2.3]{Sturm95} and \cite[Cor.3.3]{Sturm96} on the general theory of Dirichlet forms provide the existence of a locally H\"older continuous representative $p$ on $X \times X \times (0,\infty)$ for the heat kernel of $(X,\dist,\meas)$. Let us recall that by definition
\begin{equation}
h_tf(x)=\int_Xp(x, y, t)f(y)\di \meas(y) \quad \forall t>0,\,\forall x\in X,\,\forall f\in L^2(X, \meas)
\end{equation}
and 
\begin{equation}\label{trans}
p(x, y, t+s)=\int_Xp(x, z, t)p(z, y, s)\di \meas (z) \quad \forall t>0,\,\forall s>0,\,\forall x, y \in X.
\end{equation}
The sharp Gaussian estimates on this heat kernel have been proved later on in the $\RCD$ context \cite[Th.1.2]{JiangLiZhang}:
for any $\epsilon>0$, there exist $C_i:=C_i(\epsilon, K, N)>1$ for $i=1,\,2$, depending only on $K$, $N$ and $\epsilon$, such that 
\begin{equation}\label{eq:gaussian}
\frac{C_1^{-1}}{\meas (B_{\sqrt{t}}(x))}\exp \left(-\frac{\dist^2 (x, y)}{(4-\epsilon)t}-C_2t \right) \le p(x, y, t) \le \frac{C_1}{\meas (B_{\sqrt{t}}(x))}\exp \left( -\frac{\dist^2 (x, y)}{(4+\epsilon)t}+C_2t \right)
\end{equation}
for all $x,\, y \in X$ and any $t>0$, where from now on we state our inequalities with the H\"older continuous representative. Combining (\ref{eq:gaussian}) with the Li-Yau inequality \cite[Cor.1.5]{GarofaloMondino}, \cite[Th.1.2]{Jiang15}, 
we have a gradient estimate \cite[Cor.1.2]{JiangLiZhang}:
\begin{equation}\label{eq:equi lip}
|\nabla_x p(x, y, t)|\le \frac{C_3}{\sqrt{t}\meas (B_{\sqrt{t}}(x))}\exp \left(-\frac{\dist^2(x, y)}{(4+\epsilon) t}+C_4t\right)
\qquad\text{for $\meas$-a.e. $x\in X$}
\end{equation}
for any $t>0$, $y\in X$, where $C_i:=C_i(\epsilon, K, N)>1$ for $i=3,\,4$,
Note that in this paper, we will always work with \eqref{eq:gaussian} and \eqref{eq:equi lip} in the case $\epsilon = 1$.

Let us assume that $\mathrm{diam}(X, \dist)<\infty$, thus $(X, \dist)$ is compact (because in general $(X, \dist)$ is proper).
Then the doubling condition and a local Poincar\'e inequality on $(X, \dist, \meas)$ yields that the canonical embedding map $H^{1, 2}(X, \dist, \meas) \hookrightarrow L^2(X, \meas)$ is a compact operator \cite[Th.8.1]{HK}.
In particular the (minus) Laplacian $-\Delta$ admits a discrete positive spectrum $0=\lambda_0 < \lambda_1 \le \lambda_2 \le \cdots \to + \infty$. We denote the corresponding eigenfunctions by $\phi_0, \phi_1, \ldots $ with $\|\phi_i\|_{L^2}=1$. This provides the following expansions for the heat kernel $p$:
\begin{equation}\label{eq:expansion1}
p(x,y,t) = \sum_{i \ge 0} e^{- \lambda_i t} \phi_i(x) \phi_i (y) \qquad \text{in $C(X\times X)$}
\end{equation}
for any $t>0$ and
\begin{equation}\label{eq:expansion2}
p(\cdot,y,t) = \sum_{i \ge 0} e^{- \lambda_i t} \phi_i(y) \phi_i \qquad \text{in $H^{1,2}(X,\dist,\meas)$}
\end{equation}
for any $y\in X$ and $t>0$ with the H\"older representative of all eigenfunctions. 
Combining (\ref{eq:expansion1}) and (\ref{eq:expansion2}) with (\ref{eq:equi lip}), we know that $\phi_i$ is Lipschitz, in fact, it holds that
\begin{equation}
\|\phi_i\|_{L^\infty} \leq C_5 \lambda_i^{N/4}, \qquad \| \nabla \phi_i \|_{L^\infty} \leq C_5 \lambda_i^{(N+2)/4}, \qquad \lambda_i \ge C_5^{-1}i^{2/N},
\end{equation}
where $C_5:=C_5(\mathrm{diam} (X, \dist), K, N)>0$.
See for instance appendices in \cite{AHPT} and \cite{Honda0} for the proofs.

Finally let us remark that it follows from this observation with (\ref{eq:pp}) that
\begin{equation}\label{eq:100}
\Delta_xp(x, x, t) = 2\sum_{i \ge 0}e^{-\lambda_it} \left(-\lambda_i(\phi_i(x))^2 +|\nabla \phi_i|^2(x)\right) \qquad \text{in $H^{1, 2}(X, \dist, \meas)$}
\end{equation}
holds because (\ref{eq:pp}) implies that 
\begin{equation}\label{pppppppppp}
\sup_k \left\|\Delta \left(\sum_{i \ge 0}^ke^{-\lambda_it}\phi_i^2\right)\right\|_{H^{1, 2}}= \sup_k\left\|2\sum_{i \ge 0}^ke^{-\lambda_it} \left(-\lambda_i\phi_i^2 +|\nabla \phi_i|^2\right)\right\|_{H^{1, 2}}<\infty.
\end{equation}
In particular, thanks to (\ref{eq:expansion1}), we see that $p(x, x, t) \in D(\Delta)$ holds that the equality in (\ref{eq:100}) is satisfied in $L^2(X, \meas)$. Then applying Mazur's lemma for the sequence $\{\Delta (\sum_{i \ge 0}^ke^{-\lambda_it}\phi_i^2)\}_k$ with (\ref{pppppppppp}) allows us to prove that the equality in (\ref{eq:100}) holds in $H^{1, 2}(X, \dist, \meas)$.
\subsection{Infinitesimal structure}
Let $(X, \dist, \meas)$ be an $\RCD(K, N)$ space.
\begin{definition}[Regular set $\mathcal{R}_k$]
For any $k \geq 1$, we denote by $\mathcal{R}_k$ the \textit{$k$-dimensional regular set}  of $(X, \dist, \meas)$, 
namely the set of points $x \in X$ such that $(X, r^{-1}\dist, \meas (B_{r}(x))^{-1}\meas, x)$ pointed measured Gromov-Hausdorff converge to $(\mathbb{R}^k, \dist_{\mathbb{R}^k}, \omega_k^{-1}\mathcal{L}^k,0_k)$ as $r \to 0^+$.
\end{definition}

We are now in a position to introduce the latest structural result for $\RCD(K,N)$ spaces.

\begin{theorem}[Essential dimension of $\RCD (K,N)$ spaces]\label{th: RCD decomposition}
Let $(X,\dist,\meas)$ be an $\RCD (K,N)$ space. Then, there exists a unique integer $n\in [1,N]$, denoted by $\dim_{\dist,\meas}(X)$, such that
 \begin{equation}\label{eq:regular set is full}
\meas(X\setminus \mathcal{R}_n\bigr)=0.
\end{equation}
In addition, the set $\mathcal{R}_n$ is $(\meas,n)$-rectifiable and $\meas$ is representable as
$\theta\mathcal{H}^n\res\mathcal{R}_n$ for some nonnegative valued function $\theta \in L^1_{\mathrm{loc}}(X, \mathcal{H}^n)$.
\end{theorem}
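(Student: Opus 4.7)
My plan is to deduce the three assertions from the works that constitute the structural theory of $\RCD(K,N)$ spaces, breaking the proof into: (i) a.e.\ existence of Euclidean tangents in some dimension; (ii) rectifiability of each $\mathcal{R}_k$; (iii) absolute continuity $\meas\res\mathcal{R}_k\ll\mathcal{H}^k$; (iv) the constancy of the dimension $k$. Since the theorem is stated as a black box, the sketch is more a road map than a new argument.

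First, I would invoke the splitting theorem on $\RCD(K,N)$ spaces and iterate it at $\meas$-a.e.\ point. Starting at a point $x$ with a well-defined volume density one can extract a measured Gromov--Hausdorff tangent $(Y,\dist_Y,\meas_Y,y)$ which, by lower semicontinuity of the $\RCD$ condition under pmGH convergence, is itself $\RCD(0,N)$. Applying the splitting theorem repeatedly (or, equivalently, using the approach of Mondino--Naber via $\eps$-splitting maps built from harmonic almost-linear functions) produces at $\meas$-a.e.\ $x$ a Euclidean tangent $\mathbb{R}^{k(x)}$ for some integer $k(x)\in[1,N]$, i.e.\ $\meas(X\setminus\bigcup_k\mathcal{R}_k)=0$.

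Second, the rectifiability of each $\mathcal{R}_k$ is obtained by using the components of any $\eps$-splitting map as a bi-Lipschitz chart on a set of positive measure inside $\mathcal{R}_k$, after removing negligible subsets where the map is degenerate; this is the Mondino--Naber rectifiability theorem. The absolute continuity $\meas\res\mathcal{R}_k\ll\mathcal{H}^k$ is obtained by a blow-up analysis of the density $\theta_k(x):=\lim_{r\to 0^+}\meas(B_r(x))/(\omega_k r^k)$, using the Bishop--Gromov monotonicity to control the ratio and the Euclidean structure of tangents to identify the limit with a positive constant $\meas_Y$-a.e.; the finer analysis to ensure $\theta\in L^1_{\loc}(X,\mathcal{H}^k)$ follows from the covering arguments of Kell--Mondino and De Philippis--Marchese--Rindler.

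The truly hard step is (iv): showing that the measurable function $k(x)$ is $\meas$-a.e.\ constant, so that a single $n$ suffices. I would follow Bru\`e--Semola: fix two putative dimensions $k_1<k_2$ with $\meas(\mathcal{R}_{k_1})>0$ and $\meas(\mathcal{R}_{k_2})>0$, build an $\eps$-splitting map $u=(u_1,\dots,u_{k_2}):X\to\mathbb{R}^{k_2}$ of maximal rank near a density point of $\mathcal{R}_{k_2}$, and flow it by the regular Lagrangian flow of one of the gradient vector fields $\nabla u_j$. Using the parabolic estimates for this flow and the stability of regular sets under RLFs, one transports Euclidean tangent behaviour from $\mathcal{R}_{k_2}$ into $\mathcal{R}_{k_1}$, producing tangents of two distinct integer dimensions at a single point, a contradiction. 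Combining uniqueness of $n$ with step (ii) and step (iii) then yields the representation $\meas=\theta\mathcal{H}^n\res\mathcal{R}_n$. The main obstacle is precisely this Bru\`e--Semola argument, which requires a delicate interplay of the calculus of vector fields on $\RCD$ spaces, well-posedness of the continuity equation, and the stability of $\eps$-splitting maps along the flow.
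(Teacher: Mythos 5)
Your road map matches the paper exactly: the paper gives no self-contained proof of this theorem, but attributes it to precisely the same chain of results you cite --- Mondino--Naber (with Gigli--Mondino--Rajala and the splitting theorem) for the concentration and rectifiability of the $\mathcal{R}_k$, Kell--Mondino and De Philippis--Marchese--Rindler (plus Gigli--Pasqualetto) for the absolute continuity $\meas\res\mathcal{R}_k\ll\mathcal{H}^k$, and Bru\`e--Semola's regular Lagrangian flow argument for the constancy of the dimension. Your sketch is a faithful summary of that literature and is essentially the same approach.
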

 Note that the rectifiability of all sets $\mathcal{R}_k$ was inspired by \cite{CheegerColding1} and proved in \cite[Th.1.1]{MondinoNaber}, together with the concentration property
$\meas(X\setminus\cup_k\mathcal{R}_k)=0$, with the crucial uses of \cite{GigliMondinoRajala} and of \cite{Gigli13}; the absolute continuity of $\meas$ on regular sets with respect to the corresponding Hausdorff measure was proved afterwards and is a consequence of \cite[Th.1.2]{KellMondino}, \cite[Th.1.1]{DePhillippisMarcheseRindler} and \cite[Th.3.5]{GigliPasqualetto}. Finally, in the very recent work \cite[Th.0.1]{BrueSemola} it is proved that only one set $\mathcal{R}_n$ has positive
$\meas$-measure, leading to \eqref{eq:regular set is full} and to the representation $\meas=\theta\mathcal{H}^n\res\mathcal{R}_n$. Recall that our main target of the paper is $\theta$.

By slightly refining the definition of $n$-regular set, passing to a reduced set $\mathcal{R}_n^*$, general results of measure differentiation provide also  
the converse absolute continuity property $\mathcal{H}^n \ll \meas$ on $\mathcal{R}_n^*$. We summarize here the results obtained in this direction in
\cite[Th.4.1]{AmbrosioHondaTewodrose}:

\begin{theorem}[Weak Ahlfors regularity]\label{thm:RN}
Let $(X, \dist, \meas)$ be an $\RCD (K, N)$-space, $n=\dim_{\dist,\meas}(X)$, $\meas=\theta\mathcal{H}^n\res\mathcal{R}_n$ and set
\begin{equation}\label{eq:defRkstar}
{\mathcal R}_n^*:=\left\{x\in\mathcal{R}_n:\
\exists\lim_{r\to 0^+}\frac{\meas(B_r(x))}{\omega_nr^n}\in (0,\infty)\right\}.
\end{equation}
Then $\meas(\mathcal{R}_n\setminus\mathcal{R}_n^*)=0$, $\meas\res\mathcal{R}_n^*$ and 
$\mathcal{H}^n\res\mathcal{R}_n^*$ are mutually absolute continuous and
\begin{equation}\label{eq:gooddensity}
\lim_{r\to 0^+}\frac{\meas(B_r(x))}{\omega_nr^n}=\theta(x)
\qquad\text{for $\meas$-a.e. $x\in\mathcal{R}_n^*$,}
\end{equation}
\begin{equation}\label{eq:goodlimitRN}
\lim_{r\to 0^+} \frac{\omega_nr^n}{\meas(B_{r}(x))}=1_{\mathcal{R}^*_{n}}(x)
\frac{1}{\theta(x)}
\qquad\text{for $\meas$-a.e. $x\in X$.}
\end{equation}
Moreover  $\mathcal{H}^n(\mathcal{R}_n\setminus\mathcal{R}_n^*)=0$ 
if $n=N$.
\end{theorem}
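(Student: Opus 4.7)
Plan: The whole theorem hinges on the pointwise limit \eqref{eq:gooddensity}; once this is in hand, the other assertions are formal consequences. The inputs I use are Theorem \ref{th: RCD decomposition} (rectifiability of $\mathcal{R}_n$, the representation $\meas=\theta\,\mathcal{H}^n\res\mathcal{R}_n$ with $\theta\in L^1_\loc(\mathcal{H}^n)$ and $\theta>0$ a.e.) and the fact that $(X,\dist,\meas)$ is locally doubling by Bishop--Gromov.

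For \eqref{eq:gooddensity} I factorize, for $x\in\mathcal{R}_n$,
\[
\frac{\meas(B_r(x))}{\omega_n r^n}=\frac{\meas(B_r(x))}{\mathcal{H}^n(B_r(x)\cap\mathcal{R}_n)}\cdot\frac{\mathcal{H}^n(B_r(x)\cap\mathcal{R}_n)}{\omega_n r^n},
\]
and pass to the limit in each factor separately. The first factor converges to $\theta(x)$ for $\mathcal{H}^n$-a.e.\ $x\in\mathcal{R}_n$ by the Lebesgue differentiation theorem applied to the locally finite Radon measure $\mathcal{H}^n\res\mathcal{R}_n$ in the doubling space. The second factor converges to $1$ for $\mathcal{H}^n$-a.e.\ $x\in\mathcal{R}_n$ by the density-$1$ property for rectifiable sets: the $(\meas,n)$-rectifiability produces countably many biLipschitz charts covering $\mathcal{R}_n$ up to $\mathcal{H}^n$-null, and at $\mathcal{H}^n$-a.e.\ point of $\mathcal{R}_n$ the Euclidean pmGH blow-up built into the definition of $\mathcal{R}_n$ forces the metric differential of such a chart to be an isometry, which yields the density-$1$ identity. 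Multiplying gives \eqref{eq:gooddensity} $\mathcal{H}^n$-a.e.\ on $\mathcal{R}_n$, and therefore also $\meas$-a.e.\ since $\theta>0$. The second factor is the technical crux; the first is routine differentiation.

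The preliminary assertions then fall out without effort. Relation \eqref{eq:gooddensity} says that the limit defining $\mathcal{R}_n^*$ exists and lies in $(0,\infty)$ at $\meas$-a.e.\ point of $\mathcal{R}_n$, so $\meas(\mathcal{R}_n\setminus\mathcal{R}_n^*)=0$; combined with $\meas(X\setminus\mathcal{R}_n)=0$ this yields $\meas(X\setminus\mathcal{R}_n^*)=0$, and reciprocating \eqref{eq:gooddensity} (with $1_{\mathcal{R}_n^*}\equiv 1$ $\meas$-a.e.) gives \eqref{eq:goodlimitRN}. Mutual absolute continuity of $\meas\res\mathcal{R}_n^*$ and $\mathcal{H}^n\res\mathcal{R}_n^*$ is then immediate, since $\theta$ is finite and strictly positive on $\mathcal{R}_n^*$.

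The sharper conclusion $\mathcal{H}^N(\mathcal{R}_N\setminus\mathcal{R}_N^*)=0$ in the critical case $n=N$ needs Bishop--Gromov in its sharp $N$-dimensional form. The non-increasing character of $r\mapsto\meas(B_r(x))/v_{K,N}(r)$, together with $v_{K,N}(r)/(\omega_N r^N)\to 1$ as $r\to 0^+$, forces $\lim_{r\to 0^+}\meas(B_r(x))/(\omega_N r^N)$ to exist in $(0,+\infty]$ for \emph{every} $x\in X$. Hence $\mathcal{R}_N\setminus\mathcal{R}_N^*$ is precisely the set on which this limit is $+\infty$, i.e.\ the upper $N$-density of $\meas$ is infinite. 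If its $\mathcal{H}^N$-measure were positive, a $5B$-covering argument in the doubling space would yield $\meas\ge t\,\mathcal{H}^N$ on some positive-$\mathcal{H}^N$-measure subset for every $t>0$, contradicting the already-established $\meas(\mathcal{R}_N\setminus\mathcal{R}_N^*)=0$.
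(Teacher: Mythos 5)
First, a caveat on what ``the paper's proof'' means here: the paper does not prove Theorem \ref{thm:RN} at all; it is quoted as a summary of \cite{AmbrosioHondaTewodrose} (Th.\ 4.1), so your attempt has to be measured against the argument given there. Your overall logical architecture is right: everything does reduce to showing that $\lim_{r\to0^+}\meas(B_r(x))/(\omega_nr^n)$ exists and equals $\theta(x)$ at $\meas$-a.e.\ point of $\mathcal{R}_n$, and your treatment of the case $n=N$ is correct and standard (Bishop--Gromov monotonicity forces the limit to exist in $(0,+\infty]$ at \emph{every} point, and the comparison ``upper density $\ge t$ on $B$ implies $\meas(B)\ge t\,\mathcal{H}^N(B)$'', applied on the $\meas$-null set where the limit is $+\infty$, gives $\mathcal{H}^N(\mathcal{R}_N\setminus\mathcal{R}_N^*)=0$).

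The genuine gap is in the central factorization through $\mathcal{H}^n(B_r(x)\cap\mathcal{R}_n)$. Both of your factors presuppose that $\mathcal{H}^n\res\mathcal{R}_n$ is a locally finite Borel measure to which a differentiation theorem applies, and neither fact is available: the doubling property of $(X,\dist,\meas)$ yields a Vitali covering theorem for $\meas$, not for $\mathcal{H}^n\res\mathcal{R}_n$, and Theorem \ref{th: RCD decomposition} only provides $\meas=\theta\mathcal{H}^n\res\mathcal{R}_n$ with $\theta\in L^1_{\mathrm{loc}}(\mathcal{H}^n)$, which leaves open the possibility that the $\meas$-negligible part of $\mathcal{R}_n$ (the set $\{\theta=0\}$, or the complement of the countably many biLipschitz charts) carries infinite or uncontrolled $\mathcal{H}^n$-mass in every ball. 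In that case your first factor can degenerate to $0$ and your second factor need not tend to $1$; note that Kirchheim's density-$1$ theorem for rectifiable sets also requires (local) finiteness of $\mathcal{H}^n$ on the set. This possible pathology is exactly what the reduced set $\mathcal{R}_n^*$ is introduced to quarantine --- the theorem asserts mutual absolute continuity only on $\mathcal{R}_n^*$ and never claims that $\mathcal{H}^n\res\mathcal{R}_n$ is $\sigma$-finite --- so taking both factors to behave well is uncomfortably close to assuming the conclusion. The proof in \cite{AmbrosioHondaTewodrose} avoids the quantity $\mathcal{H}^n(B_r(x)\cap\mathcal{R}_n)$ altogether: it covers $\mathcal{R}_n$ up to a $\meas$-null set by $(1+\epsilon)$-biLipschitz charts $A_i$, pushes $\meas\res A_i$ forward to $\mathbb{R}^n$ where Besicovitch differentiation is available, uses that $\meas$-a.e.\ point is a $\meas$-density point of its chart (Vitali for the doubling measure $\meas$), obtains that the upper and lower $n$-densities of $\meas$ differ by a factor $(1+\epsilon)^{2n}$ a.e., lets $\epsilon\to0$, and then identifies the limit with $\theta$ via the two-sided density--Hausdorff-measure comparison theorems. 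If you want to rescue your scheme, replace $\mathcal{H}^n\res\mathcal{R}_n$ by $\mathcal{H}^n\res A_i$ for a single chart and control the error by the $\meas$-density of $A_i$ at $x$; that is in effect what the cited proof does.
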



\subsection{Second order differential structure and Riemannian metric}
Let $(X, \dist, \meas)$ be an $\RCD(K, \infty)$ space.

Inspired by \cite{Weaver}, the theory of the second order differential structure on $(X, \dist, \meas)$ based on $L^2$-normed modules is established in \cite{Gigli}.
To keep short presentations in the paper, we omit several notions, for instance, the spaces of $L^2$-vector fields denoted by $L^2(T(X, \dist, \meas))$ and of $L^2$-tensor fields of type $(0, 2)$ denoted by $L^2((T^*)^{\otimes 2}(X, \dist, \meas))$.
See \cite{Gigli} for the detail. We denote the pointwise Hilbert-Schmit norm and the pointwise scaler product by $|T|_{HS}$ and $\langle T, S \rangle$ respectively (see also \cite[Subsect.3.2]{Gigli} and \cite[Sect.$10$]{AmbrosioHonda}).

One of the important results in \cite{Gigli} we will use later is that for all $f \in D(\Delta)$, the Hessian $\mathrm{Hess}_f \in L^2((T^*)^{\otimes 2}(X, \dist, \meas))$ is well-defined and satisfies
\begin{align}
\langle \mathrm{Hess}_f, \dist f_1\otimes \dist f_2\rangle &=\frac{1}{2}\left(\langle \nabla f_1, \nabla \langle \nabla f, \nabla f_2\rangle\rangle +\langle \nabla f_2, \nabla \langle \nabla f, \nabla f_1\rangle\rangle -\langle \nabla f, \nabla \langle \nabla f_1, \nabla f_2\rangle\rangle \right) \nonumber \\
&\qquad \text{for $\meas-a.e. \, x \in X$}, \quad \forall f_i \in \mathrm{Test}F(X, \dist, \meas)
\end{align}
and the Bochner inequality with the Hessian term
\begin{equation}
\frac{1}{2}\Delta |\nabla f|^2\ge |\mathrm{Hess}_f|_{HS}^2+\langle \nabla \Delta f, \nabla f\rangle +K|\nabla f|^2
\end{equation}
in the weak sense (see \cite[Sect.4]{Gigli}).
In particular
\begin{equation}\label{eq:pp}
\int_X|\mathrm{Hess}_f|_{HS}^2\di \meas \le \int_X\left( (\Delta f)^2 -K |\nabla f|^2\right)\di \meas.
\end{equation}

Let us introduce the notion of Riemannian metrics on $(X, \dist, \meas)$. In order to simplify our argument we assume that $(X, \dist, \meas)$ is an $\RCD(K, N)$ space with $n=\dim_{\dist, \meas}(X)$ and $\mathrm{diam} (X, \dist)<\infty$ below.
Although we defined the notion as a bilinear form on $L^2(T(X, \dist, \meas))$ in \cite{AHPT}, we adopt an equivalent formulation by using tensor fields in this paper. Moreover we consider only $L^2$-ones, which is enough for our purposes.
\begin{definition}[$L^2$-Riemannian metric]
We say that $T \in L^2((T^*)^{\otimes 2}(X, \dist, \meas))$ is a \textit{Riemannian metric} if for all $\eta_i \in L^{\infty}(T^*(X, \dist, \meas))$ (which means that $\eta_i \in L^2(T^*(X, \dist, \meas))$ with $|\eta_i| \in L^{\infty}(X, \meas)$), it holds that 
\begin{equation}
\langle T, \eta_1\otimes \eta_2\rangle = \langle T, \eta_2\otimes \eta_1\rangle, \quad \langle T, \eta_1\otimes \eta_1\rangle \ge 0 \qquad \text{for $\meas$-a.e. $x\in X$}
\end{equation}
and that if $\langle T, \eta_1\otimes \eta_1\rangle=0$ for $\meas$-a.e. $x \in X$, then $\eta_1 =0$ in $L^2(T^*(X, \dist, \meas))$.  
\end{definition}
\begin{proposition}[The canonical metric $g$]\label{prop:gcanonical}
There exists a unique Riemannian metric $g \in L^2((T^*)^{\otimes 2}(X, \dist, \meas))$ such that
$$
\langle g, \dist f_1 \otimes \dist f_2\rangle=\langle\nabla f_1,\nabla f_2\rangle\qquad\text{for $\meas$-a.e. on $X$}
$$
for all Lipschitz functions $f_i$ on $X$. Then it holds that
\begin{equation}
|g|_{HS}=\sqrt{n} \qquad \text{for $\meas$-a.e. $x \in X$.}
\end{equation}
\end{proposition}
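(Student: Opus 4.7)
The plan is to construct $g$ from the gradient inner product on differentials of Lipschitz functions via the universal property of the $L^\infty$-tensor product, and then to compute $|g|_{HS}$ using the essential-dimension structure provided by Theorem \ref{th: RCD decomposition}. Uniqueness is immediate: differentials of bounded Lipschitz functions generate the cotangent module $L^2(T^*(X, \dist, \meas))$ as an $L^\infty$-module, so simple tensors $\dist f_1 \otimes \dist f_2$ generate $L^2((T^*)^{\otimes 2}(X, \dist, \meas))$, and any two tensors agreeing on them must coincide.

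For existence I would argue as follows. The assignment $(\dist f_1, \dist f_2) \mapsto \langle \nabla f_1, \nabla f_2\rangle \in L^1(X, \meas)$ is symmetric, and it is $L^\infty$-bilinear when the differentials are regarded as elements of the cotangent module (so that scalar multiplication $\phi \cdot \dist f$ is the module action, not the chain rule $\dist(\phi f)$). By the universal property of the $L^\infty$-tensor product this assignment extends uniquely to an $L^\infty$-linear map $L^2((T^*)^{\otimes 2}(X, \dist, \meas)) \to L^1(X, \meas)$. Self-duality of the tensor module under infinitesimal Hilbertianity then represents this map as pairing with a unique element $g \in L^2((T^*)^{\otimes 2}(X, \dist, \meas))$. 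Symmetry, non-negativity, and non-degeneracy of $g$ transfer from the corresponding properties of $\langle \nabla \cdot, \nabla \cdot \rangle$ on $L^2(T(X, \dist, \meas))$, so $g$ is a Riemannian metric in the required sense.

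For the Hilbert--Schmidt identity, set $n = \dim_{\dist, \meas}(X)$. By Theorem \ref{th: RCD decomposition} the cotangent module has local linear dimension $n$ at $\meas$-almost every point: on each measurable piece of a partition of $X$ one can find $n$ Lipschitz functions $f_1, \dots, f_n$ whose differentials span the cotangent fiber $\meas$-a.e., and a pointwise Gram--Schmidt procedure produces a local orthonormal coframe $(\omega_i)_{i=1}^n$ satisfying $\langle \omega_i, \omega_j\rangle = \delta_{ij}$. In such a frame the defining identity forces $\langle g, \omega_i \otimes \omega_j\rangle = \delta_{ij}$, hence
$$|g|_{HS}^2 = \sum_{i, j=1}^n \langle g, \omega_i \otimes \omega_j\rangle^2 = \sum_{i, j=1}^n \delta_{ij}^2 = n \qquad \meas\text{-a.e.}$$
The step I expect to be the most delicate is producing a genuine orthonormal coframe of differentials over the (possibly wild) rectifiable set $\mathcal{R}_n$; this rests on combining the Mondino--Naber $n$-rectifiability with the identification of the Gigli tangent module with the Euclidean tangent spaces appearing in Theorem \ref{th: RCD decomposition}. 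Once this is in place, the HS computation is purely algebraic.
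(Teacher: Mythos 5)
The paper does not actually prove Proposition \ref{prop:gcanonical}: it is imported from \cite{Gigli} and \cite{AHPT}, where $g$ is constructed directly as $\sum_{i=1}^{n}\omega_i\otimes\omega_i$ for a local orthonormal coframe $(\omega_i)$, from which the defining identity and $|g|_{HS}^2=n$ are immediate. Your route (universal property of the tensor product plus Riesz representation in the Hilbert module) reaches the same object and is correct in substance, but it has one ordering issue worth repairing. The universal property only yields a map on the \emph{algebraic} tensor product; to extend the trace functional $\dist f_1\otimes\dist f_2\mapsto\langle\nabla f_1,\nabla f_2\rangle$ to the HS-completion $L^2((T^*)^{\otimes 2}(X,\dist,\meas))$ and then represent it by an element $g$ with $|g|_{HS}\in L^2(X,\meas)$, you need the pointwise bound $|\Phi(T)|\le\sqrt{d(x)}\,|T|_{HS}(x)$, where $d(x)$ is the local dimension of the cotangent module; on an infinite-dimensional fibre the trace is not HS-continuous, so the extension step fails without finite dimensionality. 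Hence the finiteness of the local dimension (and, for the exact value $\sqrt{n}$, its constancy equal to $n=\dim_{\dist,\meas}(X)$), which you invoke only at the end to compute $|g|_{HS}$, is already required for existence. That input is available --- it is precisely the identification of the analytic dimension of the (co)tangent module with the $n$ of Theorem \ref{th: RCD decomposition}, resting on \cite{MondinoNaber}, \cite{BrueSemola} and \cite{GigliPasqualetto} --- and you correctly single it out as the delicate step; it just needs to be placed at the start of the argument rather than at the end. The remaining points (uniqueness from the fact that simple tensors generate the module, Gram--Schmidt carried out on pieces where the relevant norms are bounded away from $0$ and $\infty$, and the algebraic identity $|g|_{HS}^2=\sum_{i,j}\delta_{ij}^2=n$) are fine.
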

Note that for $T \in L^2((T^*)^{\otimes 2}(X, \dist, \meas))$, \textit{the trace} $\mathrm{tr}(T) \in L^2(X, \meas)$ is $\mathrm{tr}(T):=\langle T, g\rangle$.
The following result proved in \cite[Prop.3.2]{Han} will play a crucial role in the proof of Theorem \ref{thm:equiv}.
\begin{theorem}[Laplacian is trace of Hessian under maximal dimension]\label{thm:hanresult}
Assume that $N$ is an integer with $\mathrm{dim}_{\dist, \meas}(X)=N$. Then
for all $f \in D(\Delta)$ we see that 
\begin{equation}
\Delta f=\mathrm{tr}(\mathrm{Hess}_f) \quad \text{for $\meas$-a.e. $x \in X$}.
\end{equation}
\end{theorem}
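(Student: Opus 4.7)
The strategy is to derive $\Delta f = \tr(\mathrm{Hess}_f)$ from a pointwise improved Bochner inequality that becomes degenerate precisely when the essential dimension $n$ reaches the upper bound $N$.

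A standard density reduction handles the extension from test functions to $D(\Delta)$: by Lemma \ref{lem:density} there exist $f_i \in \mathrm{Test}F(X, \dist, \meas)$ with $f_i \to f$ in $H^{1, 2}$ and $\Delta f_i \to \Delta f$ in $L^2$, and the bound (\ref{eq:pp}) keeps $\mathrm{Hess}_{f_i}$ uniformly $L^2$-bounded in $L^2((T^*)^{\otimes 2}(X, \dist, \meas))$, so $\tr(\mathrm{Hess}_{f_i}) = \langle \mathrm{Hess}_{f_i}, g\rangle$ converges weakly in $L^2$ to $\tr(\mathrm{Hess}_f)$. It therefore suffices to treat $f \in \mathrm{Test}F$.

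The core step is to establish, for $f \in \mathrm{Test}F$, the improved Bochner inequality
$$\frac{1}{2}\Delta |\nabla f|^2 \ge |\mathrm{Hess}_f|_{HS}^2 + \frac{(\Delta f - \tr(\mathrm{Hess}_f))^2}{N - n} + \langle \nabla f, \nabla \Delta f\rangle + K|\nabla f|^2$$
in the weak sense. In the smooth weighted case $(M^n, g, e^{-\phi}\mathrm{vol}_g)$ this is nothing but the classical Bochner formula combined with (\ref{chc}), since $\Delta f - \tr(\mathrm{Hess}_f) = -\langle \nabla \phi, \nabla f\rangle$. In the $\RCD$ setting, the plan is to follow Savar\'e's self-improvement technique: apply the $\BE(K, N)$ Bochner inequality (\ref{eq:boch}) to functions of the form $F(f)$ with $F \in C^\infty(\mathbb{R})$, use the chain rules $|\nabla F(f)|^2 = F'(f)^2 |\nabla f|^2$ and $\Delta F(f) = F'(f)\Delta f + F''(f)|\nabla f|^2$, and optimise the resulting polynomial inequality over $F$. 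Hessian-type terms enter through the Leibniz rule, and the identity $|g|_{HS}^2 = n$ from Proposition \ref{prop:gcanonical}, together with the Cauchy-Schwarz bound $(\tr(\mathrm{Hess}_f))^2 \le n |\mathrm{Hess}_f|_{HS}^2$, allow the $(\Delta f)^2$-term coming from the dimensional half of (\ref{eq:boch}) to be split into a trace piece and a residual, which is exactly the defect $(\Delta f - \tr(\mathrm{Hess}_f))^2 /(N - n)$.

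With the improved inequality at hand the conclusion is immediate in the borderline case $n = N$: after multiplying by any nonzero nonnegative $\phi \in D(\Delta) \cap L^\infty(X, \meas)$ with $\Delta \phi \in L^\infty$ and integrating, the right-hand side is finite by (\ref{eq:pp}), so the defect term must vanish $\meas$-a.e., yielding $\Delta f = \tr(\mathrm{Hess}_f)$. The principal obstacle is clearly the self-improvement step itself: the Hessian Bochner inequality (\ref{eq:pp}) and the scalar $\BE(K, N)$ inequality (\ref{eq:boch}) each give only one half of the picture, and bridging them into a single pointwise estimate from which the dimensional defect $(\Delta f - \tr(\mathrm{Hess}_f))^2/(N - n)$ can be isolated requires a careful Bakry-\'Emery polynomial calculus that invokes the nontrivial rectifiability consequence $|g|_{HS} = \sqrt{n}$.
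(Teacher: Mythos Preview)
The paper does not prove this theorem; it is quoted verbatim from \cite[Prop.~3.2]{Han} and used as a black box in the proof of Theorem~\ref{thm:equiv}. So there is no ``paper's own proof'' to compare against, and what you have written is essentially a sketch of Han's argument rather than an alternative to anything in the present paper.

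That said, your sketch has a genuine gap at the step you yourself flag as the obstacle. Applying the scalar $\BE(K,N)$ inequality (\ref{eq:boch}) only to compositions $F(f)$ with a \emph{single} test function $f$ and optimising over $F$ yields a relation involving $\mathrm{Hess}_f(\nabla f,\nabla f)$ and $|\nabla f|^2$, but not the full tensor $\mathrm{Hess}_f$; in particular it cannot isolate $\tr(\mathrm{Hess}_f)=\langle \mathrm{Hess}_f,g\rangle$ against a basis. The self-improvement that actually produces the dimensional defect term $(\Delta f-\tr(\mathrm{Hess}_f))^2/(N-n)$ requires the multi-function $\Gamma_2$-calculus (polynomials $\Phi(f_1,\ldots,f_k)$ of several test functions), combined with the measure-valued Ricci tensor machinery of \cite{Gigli} and the pointwise identity $|g|_{HS}^2=n$ from Proposition~\ref{prop:gcanonical}. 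Your last paragraph correctly acknowledges that ``bridging'' the Hessian Bochner inequality with the scalar $\BE(K,N)$ inequality into a single pointwise estimate is the nontrivial content, but the proposal does not carry this out; as written it is a plan, not a proof.

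The density reduction at the beginning is fine: uniform $L^2$-bounds on $\mathrm{Hess}_{f_i}$ from (\ref{eq:pp}) together with the closedness of the Hessian operator on $D(\Delta)$ (in Gigli's framework) do give the weak convergence you need.
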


Let us introduce the pull-back Riemannian metrics by embeddings via the heat kernel (see \cite[Prop.4.7]{AHPT}).
\begin{theorem}[The pull-back metrics]\label{prop:riemanexist}
For all $t>0$ there exists a unique Riemannian metric $g_t \in L^2((T^*)^{\otimes 2}(X, \dist, \meas))$ such that 
\begin{align}\label{eq:rrcd pull back}
\int_X\langle g_t, \eta_1 \otimes \eta_2\rangle \di\meas & = \int_X
\int_X\langle \dist_xp(x, y, t), \eta_1(x)\rangle \langle \dist_xp(x, y, t), \eta_2(x)\rangle\di\meas(x)\di\meas (y), \nonumber\\
&\quad \quad \quad \quad \quad  \forall \eta_i \in L^{\infty}(T^*(X, \dist, \meas)).
\end{align}
Moreover it is representable as the $HS$-convergent series
\begin{equation}\label{eq:explicit expression}
g_t=\sum_{i=1}^{\infty}e^{-2\lambda_it}\dist \phi_i \otimes \dist \phi_i
\qquad\text{in $L^2((T^*)^{\otimes 2}(X, \dist, \meas))$.} 
\end{equation}
Finally the rescaled metric
$t\meas (B_{\sqrt{t}}(\cdot))g_t$ satisfies
\begin{equation}\label{eq:riem est}
t\meas (B_{\sqrt{t}}(\cdot)) g_t\le C(K,N)g\qquad\forall t\in (0, 1),
\end{equation}
which means that for all $\eta \in L^{\infty}(T^*(X, \dist, \meas))$,
$$
t\meas (B_{\sqrt{t}}(x)) \langle g_t, \eta\otimes \eta \rangle (x) \le C(K, N)|\eta|_{HS}^2(x), \qquad \text{for $\meas$-a.e. $x \in X$}.
$$
\end{theorem}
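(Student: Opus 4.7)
The plan is to \emph{define} $g_t$ by the spectral formula (\ref{eq:explicit expression}) and then read off the integral identity (\ref{eq:rrcd pull back}) and the pointwise bound (\ref{eq:riem est}) from it. First I would set $g_t^{(k)} := \sum_{i=1}^{k} e^{-2\lambda_i t}\,\dist \phi_i \otimes \dist \phi_i$. Using the pointwise bound $\|\nabla \phi_i\|_{L^\infty} \le C_5 \lambda_i^{(N+2)/4}$ and the Weyl-type lower bound $\lambda_i \ge C_5^{-1} i^{2/N}$ recalled in the excerpt, the tails satisfy
\[
\sum_{i \ge k+1} e^{-2\lambda_i t}|\dist\phi_i \otimes \dist\phi_i|_{HS}\le C_5^2 \sum_{i \ge k+1} e^{-2\lambda_i t}\lambda_i^{(N+2)/2} \longrightarrow 0 \quad\text{as } k\to \infty
\]
uniformly on $X$ (since $e^{-\lambda_i t}\lambda_i^{(N+2)/2}$ is bounded in $\lambda_i$ and the remaining $e^{-\lambda_i t}$ is summable through the lower bound on $\lambda_i$). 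Hence $(g_t^{(k)})_k$ is Cauchy in $L^2((T^*)^{\otimes 2}(X,\dist,\meas))$ and its limit realizes (\ref{eq:explicit expression}).

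Next I would verify the defining identity and uniqueness. Differentiating (\ref{eq:expansion2}) in $x$ gives $\dist_x p(x, y, t) = \sum_i e^{-\lambda_i t}\phi_i(y)\,\dist \phi_i$ in $L^2(T^*(X,\dist,\meas))$ for each fixed $y$. For $\eta_1, \eta_2 \in L^\infty(T^*(X,\dist,\meas))$, inserting this expansion into the right-hand side of (\ref{eq:rrcd pull back}), exchanging summation and integration (justified by (\ref{eq:equi lip}), (\ref{eq:gaussian}) and Fubini), and using the $L^2$-orthonormality $\int_X \phi_i\phi_j\di\meas = \delta_{ij}$ collapses the double sum to
\[
\sum_i e^{-2\lambda_i t}\int_X\langle \dist\phi_i,\eta_1\rangle\langle \dist\phi_i, \eta_2\rangle\di\meas = \int_X\langle g_t, \eta_1\otimes \eta_2\rangle\di\meas,
\]
which is (\ref{eq:rrcd pull back}). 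Uniqueness is immediate since simple tensors $\eta_1\otimes \eta_2$ with $\eta_i \in L^\infty(T^*)$ generate a dense subspace of $L^2((T^*)^{\otimes 2})$. Symmetry and positive semi-definiteness are manifest from the squared form $\sum_i e^{-2\lambda_it}\langle \dist\phi_i,\eta\rangle^2$; for non-degeneracy, $\langle g_t, \eta\otimes \eta\rangle = 0$ $\meas$-a.e.\ forces $\langle \dist\phi_i, \eta\rangle = 0$ $\meas$-a.e.\ for every $i$, and since $\{\phi_i\}$ is complete in $H^{1,2}(X,\dist,\meas)$ the family $\{\dist\phi_i\}$ generates $L^2(T^*)$ as an $L^\infty$-module, forcing $\eta = 0$.

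The technical heart of the statement, and what I expect to be the main obstacle, is the pointwise bound (\ref{eq:riem est}). Testing (\ref{eq:rrcd pull back}) with $\eta_1 = \eta_2 = \chi_A\eta$ for arbitrary Borel sets $A\subset X$ and applying Fubini to the right-hand side upgrades (\ref{eq:rrcd pull back}) to the pointwise identity
\[
\langle g_t,\eta\otimes\eta\rangle(x) = \int_X |\langle \dist_x p(x,y,t),\eta(x)\rangle|^2\di\meas(y) \le |\eta|^2(x)\int_X|\nabla_x p(x,y,t)|^2\di\meas(y)
\]
for $\meas$-a.e.\ $x$. Inserting the gradient estimate (\ref{eq:equi lip}) with $\epsilon = 1$ bounds the last integral by
\[
\frac{C_3^2\,e^{2C_4 t}}{t\,\meas(B_{\sqrt t}(x))^2}\int_X \exp\!\left(-\frac{2\,\dist^2(x,y)}{5t}\right)\di\meas(y),
\]
and the remaining Gaussian integral is $\le C(K,N)\meas(B_{\sqrt t}(x))$ by decomposing $X$ into the annuli $\{k\sqrt t\le \dist(x,\cdot)<(k+1)\sqrt t\}$ and using the polynomial volume growth $\meas(B_{(k+1)\sqrt t}(x)) \le C(K,N)(k+1)^N\meas(B_{\sqrt t}(x))$ from Bishop-Gromov. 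Combining these and absorbing $e^{2C_4}$ for $t\in (0,1)$ gives exactly (\ref{eq:riem est}). The delicate point throughout is keeping all manipulations compatible with the $L^2$-module structure: the series convergence, the Fubini exchange, and the localization used to derive the pointwise identity must be phrased so that the limit $g_t$ is genuinely an element of $L^2((T^*)^{\otimes 2}(X,\dist,\meas))$ rather than a merely formal object.
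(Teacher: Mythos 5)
The paper does not prove this theorem internally --- it is quoted from [AHPT, Prop.~4.7 and Th.~5.10] --- but your argument is essentially the standard one used there (define $g_t$ by the spectral series, collapse the double expansion of $\dist_x p$ via orthonormality of the $\phi_i$ to get \eqref{eq:rrcd pull back}, localize with $\chi_A\eta$ to obtain the pointwise identity, and combine the gradient bound \eqref{eq:equi lip} with a Gaussian annulus decomposition for \eqref{eq:riem est}), and it is correct. The only imprecision is cosmetic: for $K<0$ Bishop--Gromov gives $\meas(B_{(k+1)\sqrt t}(x))\le C(K,N)(k+1)^N e^{c(K,N)(k+1)\sqrt t}\,\meas(B_{\sqrt t}(x))$ rather than a purely polynomial ratio, but since $t<1$ and the Gaussian weight $e^{-2k^2/5}$ dominates any exponential in $k$, your conclusion stands.
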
 
Note that since 
$$
\mathrm{Test}(T^*)^{\otimes 2}(X, \dist, \meas):=\left\{\sum_{i \ge 1}^kf_{1, i}\dist f_{2, i} \otimes \dist f_{3, i}; k \in \mathbb{N}, f_{j, i} \in \mathrm{Test}F(X, \dist, \meas)\right\}
$$
is dense in $L^2((T^*)^{\otimes 2}(X, \dist, \meas))$ (\cite[(3.2.7)]{Gigli}), it is easily checked that 
\begin{equation}
\int_X\langle g_t, T\rangle \di\meas =\int_X\int_X \langle \dist_xp \otimes \dist_xp, T\rangle \di \meas(x)\di \meas (y), \qquad \forall T \in L^2((T^*)^{\otimes 2}(X, \dist, \meas)).
\end{equation}
A main convergence result proved in \cite{AHPT} is the following;
\begin{theorem}[$L^p$-convergence to the original metric]\label{thm:conver}
We have 
\begin{equation}
\left|t\meas (B_{\sqrt{t}}( \cdot ))g_t - c_ng\right|_{HS} \to 0 \quad in\,\,L^p(X,  \meas)
\end{equation}
for all $p \in [1, \infty)$, where we recall (\ref{eq:const}) for the definition of $c_n$.
\end{theorem}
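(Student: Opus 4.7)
The plan is to reduce the claim to $\meas$-a.e.\ pointwise convergence of the rescaled tensor and then invoke dominated convergence. Indeed, by Proposition \ref{prop:gcanonical} the constant tensor $c_ng$ has $|c_ng|_{HS}\equiv c_n\sqrt n$, while by \eqref{eq:riem est} one has $|t\meas(B_{\sqrt t}(\cdot))g_t|_{HS}\le C(K,N)\sqrt n$ for all $t\in(0,1)$. Hence the integrand $|t\meas(B_{\sqrt t}(\cdot))g_t-c_ng|_{HS}^p$ is uniformly bounded in $L^\infty(X,\meas)$, and since $\meas(X)<\infty$, it is enough to show convergence in measure (or $\meas$-a.e.) as $t\to 0^+$.

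For the pointwise analysis, I would fix $x\in\mathcal{R}_n^*$ (which is a set of full $\meas$-measure by Theorem \ref{thm:RN}) and blow up the space by the factor $\sqrt t$. By definition of $\mathcal{R}_n^*$ the rescaled pointed spaces $(X,t^{-1/2}\dist,\meas(B_{\sqrt t}(x))^{-1}\meas,x)$ converge in pmGH to $(\setR^n,\dist_{\setR^n},\omega_n^{-1}\leb^n,0_n)$, and under the same rescaling the heat kernel $p(\cdot,\cdot,t)$ of $(X,\dist,\meas)$ converts to the heat kernel of the rescaled space at time $1$. Using the Gaussian estimates \eqref{eq:gaussian} and the gradient estimates \eqref{eq:equi lip} together with stability of heat kernels along pmGH convergence of $\RCD(K,N)$ spaces, one can pass to the limit in the defining integral \eqref{eq:rrcd pull back} tested against pairs of simple 1-forms $\eta_1,\eta_2$, reducing matters to the computation of the analogous pull-back tensor on $(\setR^n,\dist_{\setR^n},\omega_n^{-1}\leb^n)$ with its standard heat kernel at time $1$. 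A direct Gaussian computation, exploiting that $\int_{\setR^n}|\partial_{x_i}(e^{-|x|^2/4})|^2\di\leb^n$ is independent of $i$, identifies this limit precisely as $c_n\,g_{\setR^n}$ with $c_n$ from \eqref{eq:const}. Combined with the density formula \eqref{eq:gooddensity}, namely $\meas(B_{\sqrt t}(x))\sim\theta(x)\omega_nt^{n/2}$, the $\theta(x)$-factors cancel (one factor from $\meas(B_{\sqrt t}(x))$ and a compensating $\theta(x)^{-1}$ arising from the rescaling of the heat kernel through its $1/\meas(B_{\sqrt t}(x))$-normalization), yielding the pointwise limit $c_ng(x)$.

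Finally, since $\mathrm{Test}(T^*)^{\otimes 2}(X,\dist,\meas)$ is countably generated and dense in $L^2((T^*)^{\otimes 2}(X,\dist,\meas))$, the pointwise convergence on a countable dense family of test tensors upgrades to convergence in the Hilbert--Schmidt norm for $\meas$-a.e.\ $x$. Together with the uniform bound discussed above, dominated convergence yields the claim for all $p\in[1,\infty)$. The main obstacle is the justification of passing to the limit inside the integral \eqref{eq:rrcd pull back} under rescaling: it requires not only pmGH convergence of the rescaled spaces but also locally uniform $C^1$-convergence of the (rescaled) heat kernel in the second variable at regular points, together with Gaussian tail control uniform in the small-time limit. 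These are supplied by \eqref{eq:gaussian}--\eqref{eq:equi lip} and Mosco convergence of the associated Dirichlet forms along pmGH convergence of $\RCD(K,N)$ spaces.
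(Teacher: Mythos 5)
The paper does not actually prove this statement itself: it imports it verbatim from \cite[Th.5.10]{AHPT} (see the sentence immediately following Theorem~\ref{thm:conver}). Your outline follows essentially the same route as that reference --- blow-up at points of $\mathcal{R}_n^*$, convergence of the rescaled heat kernels along pmGH convergence via Mosco convergence of the Cheeger energies, the explicit Gaussian computation on $\mathbb{R}^n$ producing $c_n$, and the uniform bound \eqref{eq:riem est} to upgrade $\meas$-a.e.\ convergence to $L^p$ convergence --- so it is correct in approach, with the two heavy steps you flag (locally uniform convergence of $\nabla_x p$ under rescaling with Gaussian tail control, and the a.e.\ identification of the Hilbert--Schmidt norm through countably many test forms spanning the $n$-dimensional tangent module) being precisely the content supplied by \cite{AmbrosioHondaTewodrose} and \cite[Sect.5]{AHPT}.
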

See \cite[Th.5.10]{AHPT} for their proofs of the results above.
It is worth pointing out that in general we can not improve this $L^p$-convergence to the $L^{\infty}$-one (see \cite[Rem.5.11]{AHPT}). 

We end this subsection by giving the following technical lemma.
\begin{lemma}\label{lem:co}
Let $(X, \dist, \meas)$ be a compact $\RCD(K, N)$ space with $n:=\mathrm{dim}_{\dist, \meas}(X)$.
Assume that there exists $C>0$ such that 
\begin{equation}\label{eq:domi}
\meas (B_r(x)) \ge Cr^n \quad \forall x \in X,\,\,\forall r \in (0, 1).
\end{equation}
Then as $t \to 0^+$ we see that 
\begin{equation}\label{eq:0}
t^{(n+2)/2}p(x, x, t) \to 0 \quad in\,\, H^{1, 2}(X, \dist, \meas).
\end{equation}
and that 
\begin{equation}\label{eq:00}
\left|\omega_nt^{(n+2)/2}g_t- c_n\frac{\di \mathcal{H}^n}{\di \meas}g\right|_{HS} \to 0 \qquad \text{in $L^p(X, \meas)$},\quad \forall p \in [1, \infty).
\end{equation}
\end{lemma}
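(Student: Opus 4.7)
The plan is to combine the volume lower bound (\ref{eq:domi}) with the quantitative heat kernel bounds (\ref{eq:gaussian})--(\ref{eq:equi lip}) to establish (\ref{eq:0}), and then to leverage Theorem~\ref{thm:conver} together with the density limit (\ref{eq:goodlimitRN}) of Theorem~\ref{thm:RN} to pass from the natural weighted scaling $t\meas(B_{\sqrt t}(\cdot))$ used there to the Euclidean scaling $\omega_n t^{(n+2)/2}$ required in (\ref{eq:00}). Apart from one Cauchy--Schwarz application at the level of diagonal heat kernels, everything reduces to dominated convergence once the correct pointwise bounds are in place.

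For (\ref{eq:0}), applying (\ref{eq:gaussian}) with $y=x$ and $\epsilon=1$ together with (\ref{eq:domi}) immediately gives $p(x,x,t)\le C\,t^{-n/2}$ for all $x\in X$ and $t\in(0,1)$. By (\ref{trans}) and the symmetry of $p$, the function $u_t(x):=p(x,x,t)$ satisfies $u_t(x)=\int_X p(x,z,t/2)^2\di\meas(z)$, so differentiating in $x$ and applying the Cauchy--Schwarz inequality in $z$ yields
$$|\nabla u_t(x)|^2\le 4\,u_t(x)\int_X |\nabla_x p(x,z,t/2)|^2\di\meas(z).$$
Squaring (\ref{eq:equi lip}) with $\epsilon=1$ and integrating in $z$ against the standard Gaussian-vs-ball mass bound $\int_X e^{-c\dist^2(x,y)/s}\di\meas(y)\le C(K,N,\mathrm{diam}(X,\dist))\meas(B_{\sqrt s}(x))$ (a routine consequence of Bishop--Gromov on a compact $\RCD$ space) produces $\int|\nabla_x p(x,z,s)|^2\di\meas(z)\le C'/(s\meas(B_{\sqrt s}(x)))\le C''s^{-(n+2)/2}$. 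Substituting with $s=t/2$ gives $|\nabla u_t|\le C'''t^{-(n+1)/2}$, whence $t^{(n+2)/2}u_t\le Ct$ and $t^{(n+2)/2}|\nabla u_t|\le C'''t^{1/2}$ uniformly on $X$. Since $\meas(X)<\infty$, both tend to $0$ in $L^2(X,\meas)$ and (\ref{eq:0}) follows.

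For (\ref{eq:00}), set $\rho_t(x):=\omega_n t^{n/2}/\meas(B_{\sqrt t}(x))$ and decompose
$$\omega_n t^{(n+2)/2}g_t-c_n\frac{\di\mathcal{H}^n}{\di\meas}g=\rho_t\Bigl(t\meas(B_{\sqrt t}(\cdot))g_t-c_ng\Bigr)+c_n\Bigl(\rho_t-\frac{\di\mathcal{H}^n}{\di\meas}\Bigr)g.$$
The assumption (\ref{eq:domi}) forces $\rho_t\le\omega_n/C$ uniformly on $X$ for $t\in(0,1)$, and (\ref{eq:goodlimitRN}) combined with $\meas(X\setminus\mathcal{R}_n^*)=0$ gives $\rho_t\to\di\mathcal{H}^n/\di\meas$ pointwise $\meas$-a.e.; the limit density is itself bounded, since (\ref{eq:gooddensity}) together with (\ref{eq:domi}) forces $\theta\ge C/\omega_n$ on $\mathcal{R}_n^*$. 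Dominated convergence therefore yields $\rho_t\to\di\mathcal{H}^n/\di\meas$ in every $L^p(X,\meas)$, $p\in[1,\infty)$. Using $|g|_{HS}=\sqrt n$ from Proposition~\ref{prop:gcanonical}, the second summand $L^p$-converges to zero; Theorem~\ref{thm:conver} together with the uniform bound on $\rho_t$ handles the first, giving (\ref{eq:00}).

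\textbf{Main obstacle.} The only delicate point is justifying the Cauchy--Schwarz bound on $|\nabla u_t|$, because $u_t$ involves diagonal evaluation. This is most cleanly handled via the spectral expansion $u_t=\sum_i e^{-\lambda_i t}\phi_i^2$, whose $H^{1,2}$-convergence is already implicit in the uniform bound (\ref{pppppppppp}); applying Cauchy--Schwarz to the series $\nabla u_t=\sum_i 2e^{-\lambda_i t}\phi_i\nabla\phi_i$ then gives $|\nabla u_t|^2\le 4\,u_t\cdot\mathrm{tr}(g_{t/2})$, which is exactly the pointwise form of the inequality above since $\mathrm{tr}(g_{t/2})=\int|\nabla_x p(x,z,t/2)|^2\di\meas(z)$ by (\ref{eq:explicit expression}). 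All remaining steps are dominated convergence and bookkeeping.
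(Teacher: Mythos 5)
Your argument is correct, and for \eqref{eq:00} it is essentially the paper's proof: both write the difference as an add--and--subtract between the scaling $\omega_n t^{(n+2)/2}$ and the weighted scaling $t\meas(B_{\sqrt t}(\cdot))$, use \eqref{eq:domi} together with Theorem~\ref{thm:RN} to get a bounded a.e.\ limit for the volume ratios, and conclude by dominated convergence plus Theorem~\ref{thm:conver}; whether one factors $\rho_t$ out of the $g_t$-term (as you do, controlling it by $\omega_n/C$) or factors $g_t$ out and controls $|t\meas(B_{\sqrt t}(\cdot))g_t|_{HS}$ by \eqref{eq:riem est} (as the paper does) is immaterial. Where you genuinely diverge is the gradient half of \eqref{eq:0}. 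The paper integrates by parts, $\int_X|\nabla_x(t^{(n+2)/2}p(x,x,t))|^2\di\meas=-\int_X t^{n+2}p(x,x,t)\Delta_xp(x,x,t)\di\meas$, and then only needs the sup bound on $t^{(n+2)/2}p(x,x,t)$ together with an $L^1$ bound on $\Delta_xp(x,x,t)$ coming from \eqref{eq:100}, \eqref{eq:explicit expression} and \eqref{eq:riem est}. You instead apply Cauchy--Schwarz to the spectral series $\nabla u_t=\sum_i 2e^{-\lambda_i t}\phi_i\nabla\phi_i$ to get the pointwise inequality $|\nabla u_t|^2\le 4u_t\,\mathrm{tr}(g_{t/2})$, and then bound $\mathrm{tr}(g_{t/2})\le C t^{-(n+2)/2}$ via \eqref{eq:riem est} (or equivalently via \eqref{eq:equi lip} and the Gaussian-versus-ball integral). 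Your route yields the stronger uniform ($L^\infty$) decay $t^{(n+2)/2}|\nabla u_t|\le Ct^{1/2}$ $\meas$-a.e., at the modest cost of justifying the a.e.\ manipulation of the series (which the $H^{1,2}$-convergence established around \eqref{eq:100}--\eqref{pppppppppp} does supply); the paper's integration by parts avoids any pointwise series argument but only produces the $L^2$ statement, which is all the lemma requires. Both rest on the same two inputs, the diagonal Gaussian bound \eqref{eq:gaussian} with \eqref{eq:domi} and the rescaled-metric bound \eqref{eq:riem est}, so I see no gap in your proposal.
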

\begin{proof}
By (\ref{eq:gaussian}) we see that for all $x \in X$ and all $t \in (0, 1)$;
\begin{align}\label{eq:000}
t^{(n+2)/2}p(x, x, t) \le \frac{t}{C}\meas (B_{\sqrt{t}}(x))p(x, x, t) \le \frac{C(K, N)}{C}t.
\end{align}
In particular $t^{(n+2)/2}p(x, x, t) \to 0$ in $C(X)$.
On the other hand (\ref{eq:100}) and (\ref{eq:explicit expression}) yield
\begin{align}\label{eq:01}
\int_X|\Delta_xp(x, x, t)|\di \meas(x) \le \sum_i4\lambda_ie^{-\lambda_it} =4\int_X\langle g, g_{t/2}\rangle \di \meas \le 4 \sqrt{n} \int_X|g_{t/2}|_{HS}\di \meas.  
\end{align}
In particular (\ref{eq:000}) and (\ref{eq:01}) yield
\begin{align}
&\int_X|\nabla_x (t^{(n+2)/2}p(x, x, t))|^2\di \meas (x) \nonumber \\
&=-\int_Xt^{n+2}p(x, x, t)\Delta_xp(x, x, t)\di \meas(x) \nonumber \\
&\le \frac{4C(K, N)\sqrt{n}}{C}t \int_X|t^{(n+2)/2}g_{t/2}|_{HS}\di \meas(x) \nonumber \\
&\le  \frac{4C(K, N)\sqrt{n}}{C^2}2^{(n+2)/2}t \int_X|(t/2)\meas (B_{\sqrt{t/2}}(x))g_{t/2}|_{HS}\di \meas(x) \to 0
\end{align}
as $t \to 0^+$, where we used (\ref{eq:riem est}). Thus we get (\ref{eq:0}).

Next let us prove (\ref{eq:00}).
First let us remark that (\ref{eq:domi}) yields $\mathcal{H}^n \ll \meas$. Combining this with Theorem \ref{thm:RN} shows that $\frac{\di \mathcal{H}^n}{\di \meas} \in L^{\infty}(X, \meas)$ and that as $r \to 0^+$,
\begin{equation}
\frac{\omega_nr^n}{\meas (B_r(x))} \to \frac{\di \mathcal{H}^n}{\di \meas}(x) \qquad \text{for $\meas$-a.e. $x \in X$.}
\end{equation}
Then since as $t \to 0$
\begin{align}
&\int_X\left|\omega_nt^{(n+2)/2}g_t- t\meas (B_{\sqrt{t}}(x))\frac{\di \mathcal{H}^n}{\di \meas}(x)g_t\right|_{HS}^p\di \meas \nonumber \\
&= \int_X\left| \frac{\omega_n\sqrt{t}^n}{\meas (B_{\sqrt{t}}(x))} - \frac{\di \mathcal{H}^n}{\di \meas}(x)\right|^p |t\meas (B_{\sqrt{t}}(x))g_t|_{HS}^p\di \meas \nonumber \\
&\le C(K, N)^p\int_X\left| \frac{\omega_n\sqrt{t}^n}{\meas (B_{\sqrt{t}}(x))} - \frac{\di \mathcal{H}^n}{\di \meas}(x)\right|^p\di \meas \to 0,
\end{align}
we conclude because of Theorem \ref{thm:conver}, 
where we used the dominated convergence theorem.
\end{proof}
\section{Laplacian on $(X, g_t, \meas)$}\label{sec3}
Let $(X, \dist, \meas)$ be a compact $\RCD(K, N)$ space.
We rewrite our new differential operators;
\begin{definition}[Laplacian on $(X, g_t, \meas)$]
For all $f \in D(\Delta)$ we define \textit{the Laplacian $\Delta^t f$ associated to} $g_t$ by
\begin{equation}\label{eq:twisted}
\Delta^tf:=\langle \mathrm{Hess}_f, g_t\rangle+\frac{1}{4}\langle \nabla_x\Delta_xp(x, x, 2t), \nabla f\rangle \in L^1(X, \meas).
\end{equation}
\end{definition}
Let us start calculation.
\begin{lemma}\label{lem:1}
For all $f \in D(\Delta)$ and $\psi \in H^{1, 2}(X, \dist, \meas)$ we have
\begin{align}\label{eq:1}
&\int_X\int_X\psi (x)\langle \nabla_xp, \nabla_x\langle \nabla_xp, \nabla f\rangle \rangle \di\meas(x) \di \meas(y) \nonumber \\
&= -\int_X\langle g_t, \dist f\otimes \dist \psi\rangle \di \meas +\frac{1}{4}\int_X \mathrm{div} (\psi \nabla f) \frac{\di}{\di t}p(x, x, 2t)\di \meas.
\end{align}
\end{lemma}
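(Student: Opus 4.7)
The plan is to integrate by parts twice in the $x$-variable and to match the resulting pieces to the right-hand side by means of the spectral expansion of the heat kernel. To keep every manipulation legitimate, I first replace $p(\cdot, y, t)$ by its spectral truncation $p_N(x, y, t) := \sum_{i=1}^{N} e^{-\lambda_i t}\phi_i(x)\phi_i(y)$, which is a finite linear combination of eigenfunctions; since each $\phi_i\in\mathrm{Test}F(X,\dist,\meas)$, the truncation $p_N(\cdot,y,t)$ itself lies in $\mathrm{Test}F$ in the $x$-variable for every fixed $y$. All integrations by parts below are then between test functions and $H^{1,2}$-functions, and the passage $N\to\infty$ is controlled by the $L^\infty$-bounds on $\phi_i$, $\nabla\phi_i$ and the Weyl-type bound $\lambda_i\gtrsim i^{2/N}$ against the exponential factor $e^{-\lambda_i t}$.

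For fixed $N$ and $y$, the elementary identity $\int\psi\langle\nabla g,\nabla h\rangle\di\meas = -\int h\bigl(\psi\Delta g+\langle\nabla\psi,\nabla g\rangle\bigr)\di\meas$ with $g=p_N(\cdot,y,t)$ and $h=\langle\nabla_x p_N,\nabla f\rangle$ gives, after integrating in $y$,
\[
\mathrm{LHS}_N = -\int_X\!\!\int_X \psi\,\langle\nabla_x p_N,\nabla f\rangle\,\Delta_x p_N\di\meas(x)\di\meas(y) - \int_X\!\!\int_X \langle\nabla_x p_N,\nabla f\rangle\,\langle\nabla\psi,\nabla_x p_N\rangle\di\meas(x)\di\meas(y).
\]
As $N\to\infty$, the second double integral converges to $\int\langle g_t,\dist f\otimes\dist\psi\rangle\di\meas$ by the defining relation \eqref{eq:rrcd pull back} of $g_t$, giving the first summand $-\int\langle g_t,\dist f\otimes\dist\psi\rangle\di\meas$ of the right-hand side.

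For the remaining double integral, I would perform the $y$-integration first. Using $\nabla_x p_N = \sum_i e^{-\lambda_i t}\phi_i(y)\nabla\phi_i$, $\Delta_x p_N = -\sum_j \lambda_j e^{-\lambda_j t}\phi_j(y)\phi_j$ and orthonormality $\int\phi_i\phi_j\di\meas = \delta_{ij}$, the $y$-integral collapses to
\[
\int_X \langle\nabla_x p_N,\nabla f\rangle\,\Delta_x p_N\di\meas(y) = -\sum_{i\le N}\lambda_i e^{-2\lambda_i t}\phi_i\langle\nabla\phi_i,\nabla f\rangle = -\tfrac12\Big\langle\sum_{i\le N}\lambda_i e^{-2\lambda_i t}\nabla(\phi_i^2),\nabla f\Big\rangle,
\]
where I used $\phi_i\nabla\phi_i=\tfrac12\nabla(\phi_i^2)$. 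Differentiating $p(x,x,2s)=\sum_i e^{-2\lambda_i s}\phi_i^2$ in $s$ yields $\frac{\di}{\di t}p(x,x,2t)=-2\sum_i\lambda_i e^{-2\lambda_i t}\phi_i^2$, so the bracketed sum equals $\tfrac14\langle\nabla_x\frac{\di}{\di t}p(x,x,2t),\nabla f\rangle$ in the limit $N\to\infty$. A last integration by parts $-\int\psi\langle\nabla Q,\nabla f\rangle\di\meas = \int Q\,\div(\psi\nabla f)\di\meas$ with $Q:=\frac{\di}{\di t}p(x,x,2t)$ then delivers the second summand $\tfrac14\int\div(\psi\nabla f)\frac{\di}{\di t}p(x,x,2t)\di\meas$, closing the identity.

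The main obstacle is the rigorous justification of the spectral limits and the verification that $Q=\frac{\di}{\di t}p(x,x,2t)\in H^{1,2}(X,\dist,\meas)$ needed for the final integration by parts. This last point is handled exactly as in the argument following \eqref{pppppppppp} that places $p(x,x,t)$ in $D(\Delta)$: the partial sums $\sum_{i\le N}\lambda_i e^{-2\lambda_i t}\phi_i^2$ have uniformly bounded $H^{1,2}$-norms in view of the polynomial bounds on $\|\phi_i\|_\infty$, $\|\nabla\phi_i\|_\infty$ and $\lambda_i$ together with the exponential factor, and Mazur's lemma upgrades the weak limit to a strong $H^{1,2}$-limit.
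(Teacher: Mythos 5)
Your argument is correct and follows essentially the same route as the paper: integrate by parts in $x$ to split off the term $-\int_X\langle g_t,\dist f\otimes\dist\psi\rangle\di\meas$ via the defining relation of $g_t$, collapse the $y$-integral of the remaining term by orthonormality of the $\phi_i$, and recognize $\sum_i\lambda_ie^{-2\lambda_it}\phi_i^2=-\tfrac12\tfrac{\di}{\di t}p(x,x,2t)$ before a final integration by parts against $\div(\psi\nabla f)$. The only difference is that you work with spectral truncations $p_N$ and pass to the limit, whereas the paper manipulates the full series directly; this is a matter of bookkeeping, not of substance.
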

\begin{proof}
Note that
\begin{align}\label{al:1}
&\int_X\int_X\psi(x)\langle \nabla_xp, \nabla_x\langle \nabla_xp, \nabla f\rangle \rangle \di\meas(x) \di \meas(y) \nonumber \\
&=-\int_X\int_X\mathrm{div}_x (\psi \nabla_x p)\langle \nabla_xp, \nabla f\rangle \di \meas(x) \di \meas (y) \nonumber \\
&=-\int_X\int_X(\langle \nabla \psi, \nabla_xp\rangle +\psi(x)\Delta_xp)\langle \nabla_xp, \nabla f\rangle \di \meas(x) \di \meas(y) \nonumber \\
&=-\int_X\langle g_t, \dist f \otimes \dist \psi \rangle \di \meas -\int_X\int_X\psi (x)\Delta_xp \langle \nabla_xp, \nabla f\rangle \di \meas(x) \di \meas (y)
\end{align}
and that
\begin{align}\label{eq:h2}
&-\int_X\int_X\psi (x)\Delta_xp \langle \nabla_xp, \nabla f\rangle \di \meas(x) \di \meas (y) \nonumber \\
&=-\int_X\int_X\psi (x)\left( -\sum_{i \ge 0}\lambda_ie^{-\lambda_it}\phi_i(x)\phi_i(y)\right) \left(\sum_{i \ge 0}e^{-\lambda_it}\phi_i(y)\langle \nabla \phi_i, \nabla f\rangle (x) \right)\di \meas (x) \di \meas(y)\nonumber \\
&=\sum_{i \ge 0}\lambda_ie^{-2\lambda_it}\int_X\psi (x)\phi_i(x)\langle \nabla f, \nabla \phi_i\rangle(x) \di \meas(x) \nonumber \\
&=\frac{1}{2}\sum_{i \ge 0}\lambda_ie^{-2\lambda_it}\int_X\langle \psi \nabla f, \nabla \phi_i^2\rangle \di \meas =-\frac{1}{2}\int_X\mathrm{div} (\psi \nabla f) \left( \sum_{i \ge 0}\lambda_ie^{-2\lambda_it}\phi_i^2\right)\di \meas
\end{align}
On the other hand since 
\begin{equation}
\frac{\di}{\di t}p(x, x, 2t) = -2\sum_{i \ge 0}\lambda_ie^{-2\lambda_it}\phi_i(x)^2,
\end{equation}
we have 
\begin{equation}
-\frac{1}{2}\int_X\mathrm{div} (\psi \nabla f) \left( \sum_i\lambda_ie^{-2\lambda_it}\phi_i^2\right)\di \meas = \frac{1}{4}\int_X\mathrm{div}(\psi \nabla f) \frac{\di}{\di t}p(x, x, 2t) \di \meas,
\end{equation}
which completes the proof because of (\ref{al:1}) and (\ref{eq:h2}).
\end{proof}
\begin{lemma}\label{lem:2}
For all $f \in D(\Delta)$ and $\psi \in H^{1, 2}(X, \dist, \meas)$ we have
\begin{align}\label{eq:1}
&-\frac{1}{2}\int_X\int_X\psi (x)\langle \nabla f, \nabla_x|\nabla_xp|^2\rangle\di \meas(x) \di \meas(y) \nonumber\\
&= -\frac{1}{4}\int_X \mathrm{div} (\psi \nabla f) \frac{\di}{\di t}p(x, x, 2t)\di \meas(x) +\frac{1}{4}\int_X \mathrm{div} (\psi \nabla f) \Delta_xp(x, x, 2t)\di \meas(x).
\end{align}
\end{lemma}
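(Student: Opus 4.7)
The plan mirrors the proof of Lemma \ref{lem:1}: expand everything via the spectral decomposition of $p$, use orthonormality of the eigenfunctions to collapse the inner integral in $y$, integrate by parts in $x$, and then recognize the remaining series as a linear combination of $\Delta_x p(x,x,2t)$ and $\frac{\di}{\di t}p(x,x,2t)$.

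First, using (\ref{eq:expansion2}) I would write $\nabla_xp(x,y,t)=\sum_i e^{-\lambda_i t}\phi_i(y)\nabla\phi_i(x)$, so that
\begin{equation*}
|\nabla_xp(x,y,t)|^2=\sum_{i,j}e^{-(\lambda_i+\lambda_j)t}\phi_i(y)\phi_j(y)\langle\nabla\phi_i,\nabla\phi_j\rangle(x).
\end{equation*}
Integrating in $y$ first (Fubini) and using $\int_X\phi_i\phi_j\di\meas=\delta_{ij}$ kills the off-diagonal part and leaves the diagonal series $\sum_i e^{-2\lambda_i t}|\nabla\phi_i|^2(x)$. Hence the left-hand side of the claimed identity equals
\begin{equation*}
-\frac12\sum_i e^{-2\lambda_i t}\int_X\psi\,\langle\nabla f,\nabla|\nabla\phi_i|^2\rangle\di\meas.
\end{equation*}

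Next, an integration by parts turns each summand into $\frac12\int_X|\nabla\phi_i|^2\,\mathrm{div}(\psi\nabla f)\di\meas$, so the LHS becomes $\frac12\int_X\mathrm{div}(\psi\nabla f)\bigl(\sum_i e^{-2\lambda_i t}|\nabla\phi_i|^2\bigr)\di\meas$. Now combining (\ref{eq:100}) applied at parameter $2t$, namely $\Delta_xp(x,x,2t)=2\sum_i e^{-2\lambda_i t}(-\lambda_i\phi_i^2+|\nabla\phi_i|^2)$, with $\frac{\di}{\di t}p(x,x,2t)=-2\sum_i\lambda_i e^{-2\lambda_i t}\phi_i^2$ (obtained by term-by-term differentiation of (\ref{eq:expansion1}) on the diagonal), gives the key algebraic identity
\begin{equation*}
\sum_i e^{-2\lambda_i t}|\nabla\phi_i|^2=\frac12\Bigl(\Delta_xp(x,x,2t)-\frac{\di}{\di t}p(x,x,2t)\Bigr).
\end{equation*}
Substituting yields exactly the right-hand side of (\ref{eq:1}).

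The main obstacle will be the analytic justification of the swaps above — exchanging $\nabla_x$, the integral in $y$, and the infinite spectral sums. This is precisely the same convergence issue handled in (\ref{pppppppppp}) and in Lemma \ref{lem:1}: the series $\sum_i e^{-2\lambda_i t}|\nabla\phi_i|^2$ is $\mathrm{tr}(g_t)$, which converges in $L^2(X,\meas)$ by (\ref{eq:explicit expression}) together with $|g|_{HS}=\sqrt n$ from Proposition \ref{prop:gcanonical}, while the eigenfunction bounds after (\ref{eq:expansion2}) ensure that the partial sums lie in $H^{1,2}$ with controlled norms, so integration by parts against $\psi\nabla f$ passes to the limit. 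Once these exchanges are legitimized, the rest is the elementary spectral bookkeeping outlined above.
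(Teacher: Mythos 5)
Your proposal is correct in its algebra and reaches the right identity, but it takes a genuinely different route from the paper. The paper never diagonalizes the quantity $|\nabla_x p|^2$: setting $\phi:=\mathrm{div}(\psi\nabla f)$, it runs a chain of integrations by parts in $x$ on $\frac12\iint\phi\,|\nabla_xp|^2$, uses the heat equation $\Delta_xp=\frac{\di}{\di t}p$ to trade $p\,\Delta_xp$ for $\frac12\frac{\di}{\di t}p^2$, and then collapses the $y$-integral at the very end via the Chapman--Kolmogorov identity \eqref{trans}, $\int_Xp(x,y,t)^2\di\meas(y)=p(x,x,2t)$. Your version instead does everything spectrally: orthonormality kills the off-diagonal terms after the $y$-integration, and the identity $\sum_ie^{-2\lambda_it}|\nabla\phi_i|^2=\frac12\bigl(\Delta_xp(x,x,2t)-\frac{\di}{\di t}p(x,x,2t)\bigr)$ (which is correct, by \eqref{eq:100} at time $2t$) does the final bookkeeping. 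What the paper's route buys is that the only series manipulations needed are the ones already established in \eqref{eq:100} and \eqref{pppppppppp}; your route requires in addition the interchange of $\nabla_x$ with the double sum and the $y$-integral, and the term-by-term integration by parts against $\nabla|\nabla\phi_i|^2$, whose summability rests on $\|\nabla|\nabla\phi_i|^2\|_{L^1}\le 2\|\mathrm{Hess}_{\phi_i}\|_{L^2}\|\nabla\phi_i\|_{L^2}$ via \eqref{eq:pp} together with the Weyl-type bound $\lambda_i\ge C_5^{-1}i^{2/N}$ — you gesture at this but should spell it out, since $L^2$-convergence of $\mathrm{tr}(g_t)$ alone does not justify moving $\nabla_x$ inside the sum. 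One further point you should make explicit: for general $f\in D(\Delta)$ and $\psi\in H^{1,2}$ the expression $\mathrm{div}(\psi\nabla f)$ need not exist in $L^2$, so (as the paper does) you must first prove the identity for $f,\psi\in\mathrm{Test}F(X,\dist,\meas)$ — where $\mathrm{div}(\psi\nabla f)=\langle\nabla\psi,\nabla f\rangle+\psi\Delta f\in H^{1,2}(X,\dist,\meas)$ — and then pass to the general case by Lemma \ref{lem:density} and by mollifying $\psi$ through truncation and the heat flow.
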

\begin{proof}
By Lemma \ref{lem:density} it is enough to prove (\ref{eq:1}) under assuming $f \in \mathrm{Test}(X, \dist, \meas)$.

First assume $\psi \in \mathrm{Test}F(X, \dist, \meas)$. Let $\phi:=\mathrm{div} (\psi \nabla f) \in H^{1, 2}(X, \dist, \meas)$.
Then 
\begin{align}
&-\frac{1}{2}\int_X\int_X\psi (x)\langle \nabla_x f, \nabla_x|\nabla_xp|^2\rangle \di \meas(x) \di \meas(y) \nonumber\\
&=\frac{1}{2}\int_X\int_X\phi (x) |\nabla_xp|^2\di \meas(x) \di \meas(y) \nonumber\\
&=\frac{1}{2}\int_X\int_X \langle \nabla_xp, \nabla_x (\phi p)\rangle \di \meas(x) \di \meas(y)-\frac{1}{2}\int_X\int_X \langle p\nabla_xp, \nabla_x \phi \rangle \di \meas(x) \di \meas(y) \nonumber\\
&=-\frac{1}{2}\int_X\int_X\phi (x) p\Delta_xp\di \meas (x) \di \meas (y)-\frac{1}{4}\int_X\int_X\langle \nabla_x p^2, \nabla_x \phi \rangle \di \meas (x) \di \meas (y) \nonumber \\
&=-\frac{1}{2}\int_X\int_X\phi (x) p\frac{\di}{\di t}p\di \meas (x) \di \meas (y)+\frac{1}{4}\int_X\int_X \phi\Delta_x p^2 \di \meas (x) \di \meas (y) \nonumber \\
&=-\frac{1}{4}\int_X\phi(x)\left(\int_X \frac{\di}{\di t}p^2 \di \meas (y)\right)\di \meas (x)+\frac{1}{4}\int_X \phi (x) \left( \int_X \Delta_x p^2 \di \meas (y)\right) \di \meas (x) \nonumber \\
&=-\frac{1}{4}\int_X\phi(x)\frac{\di}{\di t}\left(\int_X p^2 \di \meas (y)\right)\di \meas (x)+\frac{1}{4}\int_X \phi (x) \Delta_x\left( \int_X p^2 \di \meas (y)\right) \di \meas (x) \nonumber \\
&=-\frac{1}{4}\int_X\phi(x)\frac{\di}{\di t}p(x, x, 2t)\di \meas(x)+\frac{1}{4}\int_X \phi (x) \Delta_xp(x, x, 2t) \di \meas (x),
\end{align}
which proves (\ref{eq:1}), where we used (\ref{trans}).

Finally let us prove (\ref{eq:1}) for general $\psi \in H^{1, 2}(X, \dist, \meas)$.
Let $\psi_L:= (-L)  \vee \psi \wedge L \in H^{1, 2}(X, \dist, \meas) \cap L^{\infty}(X, \meas)$.
Since (\ref{eq:1}) holds as $\psi=h_t (\psi_L)$ for all $t>0$, letting $L \to \infty$ and then letting $t \to 0^+$ shows the desired claim.
\end{proof}
\begin{theorem}[Integration by parts on $(X, g_t, \meas)$]\label{thm:green}
For all $f \in D(\Delta)$ and $\psi \in H^{1, 2}(X, \dist, \meas) \cap L^{\infty}(X, \meas)$ we have
\begin{equation}\label{eq:div}
\int_X\langle g_t, \dist \psi \otimes \dist f\rangle \di \meas = -\int_X\psi\Delta^tf \di \meas.
\end{equation}
\end{theorem}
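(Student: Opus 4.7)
\medskip

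\textbf{Proposal.} The plan is to apply the Hessian identity with the heat kernel as test function and then to match terms with Lemmas \ref{lem:1} and \ref{lem:2}. Specifically, for $f \in D(\Delta)$ the Hessian $\mathrm{Hess}_f$ is defined on $L^2((T^*)^{\otimes 2})$, and from the symmetric version of the defining identity (taking $f_1=f_2=h$) one has
\begin{equation}\label{eq:hessidn}
\langle \mathrm{Hess}_f, \dist h \otimes \dist h\rangle = \langle \nabla h, \nabla \langle \nabla f, \nabla h\rangle\rangle - \frac{1}{2}\langle \nabla f, \nabla |\nabla h|^2\rangle,
\end{equation}
valid first for $h \in \mathrm{Test}F(X,\dist,\meas)$, and hence by Lemma \ref{lem:density} and (\ref{eq:pp}) extending to $h=p(\cdot,y,t)$ (which belongs to $D(\Delta)$ by the spectral expansion (\ref{eq:expansion2}), with $\Delta_x p \in H^{1,2}$ by (\ref{eq:100})).

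First I would substitute $h=p(\cdot,y,t)$ in (\ref{eq:hessidn}), multiply by $\psi(x)$, and integrate over $X\times X$ against $\di\meas(x)\di\meas(y)$. The left-hand side becomes
\begin{equation}
\int_X \int_X \psi(x)\langle \mathrm{Hess}_f, \dist_x p\otimes \dist_x p\rangle\di\meas(x)\di\meas(y) = \int_X \psi\,\langle \mathrm{Hess}_f, g_t\rangle\di\meas
\end{equation}
by (\ref{eq:rrcd pull back}), justified by density of test tensors in $L^2((T^*)^{\otimes 2})$ combined with (\ref{eq:riem est}) to exchange sum/integral. On the right-hand side, Lemma \ref{lem:1} handles the first term and Lemma \ref{lem:2} handles the second.

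Adding the outputs of the two lemmas, the $\frac{\di}{\di t}p(x,x,2t)$ contributions have opposite signs and cancel exactly, leaving
\begin{equation}
\int_X \psi\,\langle \mathrm{Hess}_f, g_t\rangle\di\meas = -\int_X \langle g_t, \dist f \otimes \dist\psi\rangle\di\meas + \frac{1}{4}\int_X \mathrm{div}(\psi\nabla f)\,\Delta_x p(x,x,2t)\di\meas.
\end{equation}
Then I would interpret the last integral via the duality $\int_X \mathrm{div}(\psi\nabla f)\,h\,\di\meas = -\int_X \langle \psi\nabla f,\nabla h\rangle\,\di\meas$ with $h=\Delta_x p(x,x,2t)\in H^{1,2}$, yielding
\begin{equation}
\frac{1}{4}\int_X \mathrm{div}(\psi\nabla f)\,\Delta_x p(x,x,2t)\di\meas = -\frac{1}{4}\int_X \psi\,\langle \nabla f, \nabla_x \Delta_x p(x,x,2t)\rangle\di\meas.
\end{equation}
Combining with the definition (\ref{eq:twisted}) of $\Delta^t f$ gives precisely (\ref{eq:div}).

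The main obstacle I expect is not the algebra, which is essentially bookkeeping, but the functional-analytic justification: extending (\ref{eq:hessidn}) to $h=p(\cdot,y,t)$, commuting the $y$-integration with the Hessian pairing (so that one indeed sees $g_t$), and justifying the integration by parts against $\mathrm{div}(\psi\nabla f)$ when $\psi$ is only $L^\infty\cap H^{1,2}$ rather than a test function. For the latter I would first establish (\ref{eq:div}) for $\psi\in \mathrm{Test}F(X,\dist,\meas)$ (so that $\mathrm{div}(\psi\nabla f) = \langle\nabla\psi,\nabla f\rangle + \psi\Delta f \in L^2$ is a bona fide function) and then extend to general $\psi\in H^{1,2}\cap L^\infty$ by the truncation-plus-heat-flow approximation used at the end of Lemma \ref{lem:2}, using the estimates (\ref{eq:22}) and the $L^2$-regularity of $g_t$ (via (\ref{eq:riem est})) to pass to the limit in both sides of (\ref{eq:div}).
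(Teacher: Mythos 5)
Your proposal is correct and follows essentially the same route as the paper: expand $\langle\mathrm{Hess}_f, g_t\rangle$ via the symmetric Hessian identity applied to $h=p(\cdot,y,t)$, feed the two resulting terms into Lemmas \ref{lem:1} and \ref{lem:2}, cancel the $\frac{\di}{\di t}p(x,x,2t)$ contributions, and integrate by parts in the remaining $\Delta_xp(x,x,2t)$ term to recover the definition of $\Delta^tf$. The extra functional-analytic care you flag (extending to $h=p(\cdot,y,t)$ and handling general $\psi\in H^{1,2}\cap L^\infty$) is handled in the paper by the approximation arguments already built into Lemmas \ref{lem:density} and \ref{lem:2}, so nothing further is needed.
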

\begin{proof}
Lemmas \ref{lem:1} and \ref{lem:2} yield
\begin{align*}
&\int_X\psi \langle \mathrm{Hess}_f, g_t\rangle \di \meas \nonumber \\
&=\int_X\int_X\psi (x)\langle \mathrm{Hess}_f, \dist_xp \otimes \dist_xp\rangle \di \meas(x) \di \meas(y) \nonumber \\
&=\int_X\int_X\psi (x)\langle \nabla_xp, \nabla_x\langle \nabla_xp, \nabla f\rangle \rangle \di \meas(x) \di \meas(y) -\frac{1}{2}\int_X\int_X\psi (x)\langle \nabla f, \nabla_x|\nabla_xp|^2\rangle\di \meas(x) \di \meas(y) \nonumber \\
&=-\int_X\langle g_t, \dist f \otimes \dist \psi \rangle \di \meas +\frac{1}{4}\int_X\mathrm{div} (\psi \nabla f)\Delta_xp(x, x, 2t)\di \meas (x) \nonumber \\
&=-\int_X\langle g_t, \dist f \otimes \dist \psi \rangle \di \meas-\frac{1}{4}\int_X\psi \langle \nabla f, \nabla_x \Delta_xp(x, x, 2t)\rangle \di \meas (x),
\end{align*}
which proves (\ref{eq:div}).
\end{proof}
\section{Characterization of noncollapsed $\RCD$ spaces}\label{sec4}
\subsection{Proof of Theorem \ref{thm:equiv}}

Assume that (2) holds. Then the Bishop-Gromov inequality yields that (\ref{eq:meas}) holds.
Moreover it follows from Theorem \ref{thm:hanresult} that (\ref{eq:lap}) holds. This proves the implication from (2) to (1).

Next we assume that (1) holds. Fix a nonconstant eigenfunction $f$ of $-\Delta$ on $(X, \dist, \meas)$ with the eigenvalue $\lambda > 0$.
Applying Theorem \ref{thm:green} as $\psi \equiv 1$ shows 
\begin{equation}\label{eq:12}
0=-\int_X\langle \mathrm{Hess}_f, \omega_nt^{(n+2)/2}g_t\rangle \di \meas -\frac{1}{4}\int_X \langle \nabla \omega_nt^{(n+2)/2}p(x, x, 2t), \nabla \Delta f(x)\rangle \di \meas (x).
\end{equation}
Lemma \ref{lem:co} yields that as $t \to 0^+$, the first term of the RHS of (\ref{eq:12}) converges to 
\begin{equation}\label{eq:13}
-c_n\int_X\mathrm{tr}(\mathrm{Hess}_f)\frac{\di \mathcal{H}^n}{\di \meas} \di \meas =c_n\lambda \int_Xf\frac{\di \mathcal{H}^n}{\di \meas} \di \meas.
\end{equation}

On the other hand Lemma \ref{lem:co} yields that as $t \to 0^+$, the second term of the RHS of (\ref{eq:12}) converges to $0$. Thus (\ref{eq:13}) is equal to $0$, in particular $\frac{\di \mathcal{H}^n}{\di \meas}$ is $L^2$-orthogonal to $f$, which shows that $\frac{\di \mathcal{H}^n}{\di \meas}$ must be a constant function.  

For all $\psi \in D(\Delta)$ since
\begin{equation}\label{eqqq}
\psi =\sum_{i \ge 0} \left(\int_X\psi \phi_i\di \meas\right) \phi_i \qquad \text{in $H^{1, 2}(X, \dist, \meas)$}
\end{equation}
and 
\begin{equation}\label{eqqqq}
\Delta \psi =-\sum_{i \ge 0} \lambda_i \left(\int_X\psi \phi_i\di \meas\right) \phi_i \qquad \text{in $L^{2}(X, \meas)$}
\end{equation}
(c.f. appendices of \cite{AHPT} and \cite{Honda0}),  combining (\ref{eqqq}) and (\ref{eqqqq}) with (\ref{eq:pp}) yields 
\begin{equation}
\mathrm{Hess}_{\psi}=\sum_{i \ge 0} \left(\int_X\psi \phi_i\di \meas\right) \mathrm{Hess}_{\phi_i} \qquad \text{in $L^2((T^*)^{\otimes 2}(X, \dist, \meas))$.}
\end{equation}
In particular 
\begin{align}
\mathrm{tr}(\mathrm{Hess}_{\psi}) &= \left\langle \sum_{i \ge 0} \left(\int_X\psi \phi_i\di \meas\right) \mathrm{Hess}_{\phi_i}, g\right\rangle \nonumber \\
&=\sum_{i \ge 0} \left(\int_X\psi \phi_i\di \meas\right)\langle \mathrm{Hess}_{\phi_i}, g\rangle \nonumber \\
&=\sum_{i \ge 0} \left(\int_X\psi \phi_i\di \meas \right)\Delta \phi_i =-\sum_{i \ge 0} \lambda_i \left(\int_X\psi \phi_i\di \meas\right) \phi_i=\Delta \psi \quad \text{in $L^2(X, \meas)$.}
\end{align}
Therefore if $\Delta \psi \in H^{1, 2}(X, \dist, \meas)$, then in the weak sense it holds that
\begin{align}
\frac{1}{2}\Delta |\nabla \psi|^2 &\ge |\mathrm{Hess}_{\psi}|^2 +\langle \nabla \Delta \psi, \nabla \psi\rangle +K|\nabla \psi|^2 \nonumber \\
&\ge \frac{(\mathrm{tr}(\mathrm{Hess}_{\psi}))^2}{n}+\langle \nabla \Delta \psi, \nabla \psi\rangle +K|\nabla \psi|^2 \nonumber \\
&=\frac{(\Delta \psi)^2}{n}+\langle \nabla \Delta \psi, \nabla \psi\rangle +K|\nabla \psi|^2.
\end{align} 
This shows that $(X, \dist, \meas)$ is an $\RCD(K, n)$ space. Thus we get (2).
\,\,\, $\square$
\subsection{Witten Laplacian on $\RCD$ spaces}\label{4.2}
Let us recall that for a closed manifold $(M^n, g)$ with a smooth function $\phi \in C^{\infty}(M^n)$, the corresponding Laplacian of the weighted space $(M^n, g, e^{-\phi}\mathrm{vol}_g)$ is the \textit{Witten Laplacian} $\Delta_{\phi}$, that is, 
\begin{equation}
\int_{M^n}\langle \nabla f_1, \nabla f_2\rangle e^{-\phi}\di \mathrm{vol}_g=-\int_{M^n}f_1\Delta_{\phi}f_2 e^{-\phi}\di \mathrm{vol}_g \qquad \forall f_i \in C^{\infty}(M^n),
\end{equation}
where $\Delta_{\phi}f:=\mathrm{tr}(\mathrm{Hess}_f)-\langle \nabla \phi, \nabla f\rangle$.
By using the formula (\ref{eq:div}) we can prove an analogous result in the nonsmooth setting. 
Compare with \cite[Prop.3.5]{Han2}. 
\begin{theorem}[Witten Laplacian on $\RCD$ spaces]\label{thm:be}
Let $n \in [1, \infty)$ and let $(X, \dist)$ be a compact metric space satisfying that there exists $C>0$ such that 
\begin{equation}\label{eq:11}
\mathcal{H}^n(B_r(x)) \ge Cr^n \quad \forall x \in X,\,\,\forall r \in (0, 1).
\end{equation}
If $(X, \dist, e^{-\phi} \mathcal{H}^n)$ is an $\RCD(K, N)$ space for some $N \in [1, \infty)$ and some $\phi \in \mathrm{LIP}(X, \dist)$, then for all $f \in D(\Delta )$ we have
\begin{equation}\label{eq:be1}
\Delta f= \mathrm{tr}(\mathrm{Hess}_f) -\langle \nabla \phi, \nabla f\rangle \qquad \text{in $L^2(X, \meas)$.}
\end{equation}
\end{theorem}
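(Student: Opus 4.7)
The plan is to follow the strategy of Theorem~\ref{thm:equiv}, but with a general $\psi$ (rather than $\psi\equiv 1$) in the integration by parts formula of Theorem~\ref{thm:green}, and then to convert the resulting $\mathcal{H}^n$-identity into the desired Witten form by the product rule applied to $e^{\phi}$.

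A preparatory step is to check that the essential dimension of $(X, \dist, \meas)$ is exactly $n$, so that Lemma~\ref{lem:co} is applicable. Since $\phi$ is Lipschitz and $X$ is compact, $e^{-\phi}$ is bounded above and below, so the lower bound~(\ref{eq:11}) transfers to $\meas(B_r(x))\ge C'r^n$. Set $k := \mathrm{dim}_{\dist, \meas}(X)$. If $k<n$, then $\mathcal{H}^n(\mathcal{R}_k)=0$ by $k$-rectifiability of $\mathcal{R}_k$, while $\meas(X\setminus\mathcal{R}_k)=0$ combined with $e^{-\phi}>0$ gives $\mathcal{H}^n(X\setminus\mathcal{R}_k)=0$; adding these contradicts $\mathcal{H}^n(X)>0$. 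Hence $k=n$ and $\frac{\di\mathcal{H}^n}{\di\meas}=e^{\phi}$.

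For the core computation, fix $\psi, f \in \mathrm{Test}F(X,\dist,\meas)$. By Theorem~\ref{thm:green} together with the Leibniz rule $\mathrm{div}(\psi\nabla f)=\psi\Delta f+\langle\nabla\psi,\nabla f\rangle$ (applied to integrate by parts the term containing $\nabla_x \Delta_x p$, using that $\Delta_x p(x,x,2t)\in H^{1,2}$ by~(\ref{eq:100})), one obtains
\[
\int_X \langle g_t, d\psi\otimes df\rangle\,\di\meas = -\int_X \psi\langle \mathrm{Hess}_f, g_t\rangle\,\di\meas + \frac{1}{4}\int_X \Delta_x p(x,x,2t)\bigl(\psi\Delta f + \langle\nabla\psi,\nabla f\rangle\bigr)\,\di\meas.
\]
Multiplying by $\omega_n t^{(n+2)/2}$ and letting $t\to 0^+$, Lemma~\ref{lem:co} ensures that the LHS tends to $c_n\int e^{\phi}\langle\nabla\psi,\nabla f\rangle\,\di\meas$ and the first RHS term to $-c_n\int \psi e^{\phi}\,\mathrm{tr}(\mathrm{Hess}_f)\,\di\meas$. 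For the last term, use $p(x,x,2t)\in D(\Delta)$ and the $H^{1,2}$-regularity of $\psi\Delta f+\langle\nabla\psi,\nabla f\rangle$ (standard in Gigli's test-function calculus) to integrate by parts once more; Cauchy--Schwarz together with $\omega_n t^{(n+2)/2}p(x,x,2t)\to 0$ in $H^{1,2}$ (by~(\ref{eq:0})) makes this term vanish. After dividing by $c_n$ and using $e^{\phi}\di\meas=\di\mathcal{H}^n$, one arrives at
\[
\int_X \langle\nabla\psi,\nabla f\rangle\,\di\mathcal{H}^n = -\int_X \psi\,\mathrm{tr}(\mathrm{Hess}_f)\,\di\mathcal{H}^n.
\]

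To pass to the Witten form, expand $e^{\phi}\nabla\psi = \nabla(e^{\phi}\psi) - e^{\phi}\psi\nabla\phi$ (valid since $\phi$ Lipschitz implies $e^{\phi}\in\mathrm{LIP}(X,\dist)$), so that $e^{\phi}\psi\in H^{1,2}(X,\dist,\meas)$ can be paired with $f\in D(\Delta)$ via the $\meas$-integration by parts defining $\Delta$. The previous identity then rearranges to
\[
\int_X e^{\phi}\psi\bigl(\Delta f + \langle\nabla\phi,\nabla f\rangle - \mathrm{tr}(\mathrm{Hess}_f)\bigr)\,\di\meas = 0.
\]
Since multiplication by $e^{\phi}$ is an $L^2(X,\meas)$-isomorphism (as $e^{\pm\phi}$ is bounded) and $\mathrm{Test}F$ is $L^2$-dense, $\{e^{\phi}\psi:\psi\in\mathrm{Test}F\}$ is also dense in $L^2(X,\meas)$, which yields~(\ref{eq:be1}) for $f\in\mathrm{Test}F$. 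Lemma~\ref{lem:density} finally promotes the identity to all $f\in D(\Delta)$, using~(\ref{eq:pp}) for continuity of $\mathrm{Hess}_f$ and $|\nabla\phi|\in L^{\infty}$ for continuity of the drift term. The main technical obstacle is the vanishing of the $\Delta_x p$-term: although $\omega_n t^{(n+2)/2}p(x,x,2t)\to 0$ in $H^{1,2}$, the Laplacian $\Delta_x$ must first be moved off of $p$ and onto the composite test function $\psi\Delta f + \langle\nabla\psi,\nabla f\rangle$, whose $H^{1,2}$-regularity is a standard but essential input from Gigli's calculus on $\RCD$ spaces.
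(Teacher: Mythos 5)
Your proposal is correct and matches the paper's proof in all essentials: the paper likewise derives $\int_X\langle\nabla\psi,\nabla f\rangle\,\di\mathcal{H}^n=-\int_X\psi\,\mathrm{tr}(\mathrm{Hess}_f)\,\di\mathcal{H}^n$ for test functions ``by an argument similar to the proof of Theorem~\ref{thm:equiv}'' (your handling of the $\Delta_xp(x,x,2t)$-term, moving the Laplacian onto $\mathrm{div}(\psi\nabla f)\in H^{1,2}(X,\dist,\meas)$ and invoking (\ref{eq:0}), is precisely the detail the paper leaves implicit) and then performs the identical $e^{\phi}$ product-rule rearrangement and density argument. The only loose end is that excluding $\dim_{\dist,\meas}(X)<n$ does not by itself give $\dim_{\dist,\meas}(X)=n$: the case $k:=\dim_{\dist,\meas}(X)>n$ must also be ruled out, which is immediate since $\mathcal{H}^n(X)\le e^{\max\phi}\meas(X)<\infty$ implies $\mathcal{H}^k(X)=0$ for every $k>n$, contradicting the representation $\meas=\theta\mathcal{H}^k\res\mathcal{R}_k$ of Theorem~\ref{th: RCD decomposition}.
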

\begin{proof}
Let $\meas:=e^{-\phi}\mathcal{H}^n$.
By Lemma \ref{lem:density} it is enough to prove (\ref{eq:be1}) under assuming $f \in \mathrm{Test}F(X, \dist, \meas)$. 
Note that 
\begin{equation}
\meas (B_r(x))=\int_{B_r(x)}e^{-\phi}\di \mathcal{H}^n \ge Ce^{-\max \phi}r^n, \quad \forall x \in X,\,\,\forall r \in (0, 1).
\end{equation}
Then by an argument similar to the proof of Theorem \ref{thm:equiv} we see that for all $\psi \in \mathrm{Test}F(X, \dist, \meas)$
\begin{equation}\label{eq:14}
\int_X\langle \nabla \psi, \nabla f\rangle e^{\phi}\di \meas = -\int_X\psi \mathrm{tr}(\mathrm{Hess}_f) e^{\phi}\di \meas.
\end{equation}
Since the LHS of (\ref{eq:14}) is equal to 
\begin{align}
\int_X\langle \nabla (\psi e^{\phi}), \nabla f\rangle \di \meas -\int_X\langle \nabla \phi, \nabla f\rangle \psi e^{\phi}\di \meas =-\int_X \psi e^{\phi}\Delta f\di \meas -\int_X\langle \nabla \phi, \nabla f\rangle \psi e^{\phi}\di \meas,
\end{align}
we have 
\begin{equation}
\int_X\left( -\Delta f - \langle \nabla \phi, \nabla f\rangle + \mathrm{tr}(\mathrm{Hess}_f)\right) \psi e^{\phi}\di \meas =0
\end{equation}
which completes the proof of (\ref{eq:be1}) because $\psi$ is arbitrary.
\end{proof}
We end this paper by giving another characterization of non-collapsed $\RCD$ spaces;
\begin{corollary}
Let $n, N \in [1, \infty)$ and let $(X, \dist, \mathcal{H}^n)$ be a compact $\RCD(K, N)$ space.
Then the following two conditions are equivalent;
\begin{enumerate}
\item There exists $C>0$ such that 
\begin{equation}
\mathcal{H}^n(B_r(x)) \ge Cr^n \quad \forall x \in X,\,\, \forall r \in (0, 1).
\end{equation} 
\item $(X, \dist, \mathcal{H}^n)$ is an $\RCD(K, n)$ space, that is, it is a non-collapsed space.
\end{enumerate}
\end{corollary}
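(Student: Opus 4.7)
The plan is to reduce the corollary to Theorem \ref{thm:equiv} applied with reference measure $\meas = \mathcal{H}^n$. The direction $(2) \Rightarrow (1)$ is immediate: on a compact $\RCD(K, n)$ space the Bishop-Gromov monotonicity of $r \mapsto \mathcal{H}^n(B_r(x))/V^{K,n}(r)$, combined with the finiteness and positivity of $\mathcal{H}^n(X)$, produces a uniform lower bound of the form $\mathcal{H}^n(B_r(x)) \ge c\, r^n$ for $r \in (0, 1)$.

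For $(1) \Rightarrow (2)$ my plan is to verify both conditions appearing in item (1) of Theorem \ref{thm:equiv}. Condition (1)(b) is granted by hypothesis. For condition (1)(a) I would invoke Theorem \ref{thm:hanresult}, which requires identifying the essential dimension $k := \dim_{\dist, \meas}(X)$ with the prescribed exponent $n$ (and in particular forces $n$ to be an integer). Granted this identification, Theorem \ref{thm:equiv} delivers $(X, \dist, \mathcal{H}^n)$ as an $\RCD(K, n)$ space with the constant in (\ref{eq:meas}) automatically equal to $1$.

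The main step is thus the identification $k = n$, which I would establish via Hausdorff-dimension comparisons. Theorem \ref{th: RCD decomposition} writes $\mathcal{H}^n = \theta\, \mathcal{H}^k \res \mathcal{R}_k$ with $\mathcal{R}_k$ being $(\meas, k)$-rectifiable; modulo a $\meas$-null set, $\mathcal{R}_k$ is a countable union of Lipschitz images of bounded subsets of $\mathbb{R}^k$, so its Hausdorff dimension is at most $k$. Since $\meas(\mathcal{R}_k) = \meas(X) > 0$ and $\meas = \mathcal{H}^n$, we have $\mathcal{H}^n(\mathcal{R}_k) > 0$, forcing the Hausdorff dimension of $\mathcal{R}_k$ to be at least $n$, hence $n \le k$. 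Conversely, $\meas(\mathcal{R}_k) > 0$ together with the representation $\meas = \theta\, \mathcal{H}^k \res \mathcal{R}_k$ gives $\mathcal{H}^k(\mathcal{R}_k) > 0$, so the Hausdorff dimension of $X$ is at least $k$; the finiteness $\mathcal{H}^n(X) = \meas(X) < \infty$ (from compactness of the $\RCD(K, N)$ space) forces Hausdorff dimension at most $n$, whence $k \le n$. I expect this dimension identification to be the main obstacle; it is a soft geometric-measure-theoretic argument that does not itself invoke the volume lower bound, which enters only as hypothesis (1)(b) in the final application of Theorem \ref{thm:equiv}.
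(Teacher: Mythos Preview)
Your dimension identification $k=\dim_{\dist,\meas}(X)=n$ is correct and, in fact, is an ingredient the paper leaves implicit. The genuine gap is in your invocation of Theorem \ref{thm:hanresult}. That theorem's hypothesis is that the essential dimension equals the \emph{upper bound} $N$ in the $\RCD(K,N)$ condition (the ``maximal dimension'' or weakly non-collapsed case). You have established $k=n$, but the corollary allows $n<N$; nothing in the hypotheses forces $n=N$, and indeed any non-collapsed $\RCD(K,n)$ space is also $\RCD(K,N)$ for every $N\ge n$, so the statement must cover $n<N$. With $k=n<N$ Theorem \ref{thm:hanresult} simply does not apply, and you have no route to $\Delta f=\mathrm{tr}(\mathrm{Hess}_f)$.

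The paper avoids this by taking a different path to condition (1)(a): it applies Theorem \ref{thm:be} with $\phi\equiv 0$. Under the volume lower bound $\mathcal{H}^n(B_r(x))\ge Cr^n$ (precisely your hypothesis (1)), Theorem \ref{thm:be} yields $\Delta f=\mathrm{tr}(\mathrm{Hess}_f)$ directly for all $f\in D(\Delta)$, regardless of whether $n=N$. The proof of Theorem \ref{thm:be} uses the metric integration by parts formula obtained by letting $t\to 0^+$ in Theorem \ref{thm:green}; this is where your volume lower bound actually does work beyond merely being hypothesis (1)(b). Once (1)(a) is secured this way, together with your identification $k=n$ so that Theorem \ref{thm:equiv} applies with the correct essential dimension, the conclusion follows exactly as you outline.
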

\begin{proof}
The implication from (2) to (1) is trivial because of the Bishop-Gromov inequality.

Assume that (1) holds.
Then applying Theorem \ref{thm:be} as $\phi \equiv 0$ yields that (\ref{eq:lap}) holds.
Therefore Theorem \ref{thm:equiv} shows that (2) holds.
\end{proof}


\begin{thebibliography}{GMS13}
\bibitem[ACDM15]{AmbrosioColomboDiMarino}
      \textsc{L. Ambrosio, M. Colombo, S. Di Marino}:
      \textit{Sobolev spaces in metric measure spaces: reflexivity and lower semicontinuity of slope.}
      Adv. Stud. in Pure Math. {\bf 67} (2015), 1--58.

\bibitem[AGS14a]{AmbrosioGigliSavare13}
	\textsc{L. Ambrosio, N. Gigli, G. Savar\'e}:
	\textit{Calculus and heat flow in metric measure spaces and applications to spaces with Ricci bounds from below}.
	Invent. Math. \textbf{195} (2014), 289--391.

\bibitem[AGS14b]{AmbrosioGigliSavare14}
	\textsc{L. Ambrosio, N. Gigli, G. Savar\'e}:
	\textit{Metric measure spaces with Riemannian Ricci curvature bounded from below}.
	Duke Math. J. \textbf{163} (2014), 1405--1490.
	    
       


\bibitem[AH17]{AmbrosioHonda}
        \textsc{L. Ambrosio, S. Honda}:
        \textit{New stability results for sequences of metric measure spaces with uniform Ricci bounds from below.} 
Measure theory in non-smooth spaces, 1–51, Partial Differ. Equ. Meas. Theory, De Gruyter Open, Warsaw, 2017.


\bibitem[AHT18]{AmbrosioHondaTewodrose}
	\textsc{L. Ambrosio, S. Honda, D. Tewodrose}:
	\textit{Short-time behavior of the heat kernel and Weyl's law on $\RCD^*(K, N)$-spaces},
         Ann. Glob. Anal. Geom. \textbf{53}(1) (2018), 97-119.

\bibitem[AHTP18]{AHPT}
	\textsc{L. Ambrosio, S. Honda, J. W. Portegies, D. Tewodrose}:
	\textit{Embedding of $\RCD^*(K, N)$-spaces in $L^2$ via eigenfunctions},
         Arxiv preprint 1812.03712.


\bibitem[AMS15]{AmbrosioMondinoSavare}
        \textsc{L. Ambrosio, A. Mondino, G. Savar\'e}:
\textit{Nonlinear diffusion equations and curvature conditions in metric measure spaces.}
ArXiv preprint 1509.07273, to appear in Mem. Amer. Math. Soc. 
	
	

\bibitem[AMS16]{AmbrosioMondinoSavare2}
        \textsc{L. Ambrosio, A. Mondino, G. Savar\'e}:
         \textit{On the Bakry-\'Emery Condition, the Gradient Estimates and the Local-to-Global Property of $\RCD^*(K, N)$ Metric Measure Spaces.}
         J. Geom. Anal. \textbf{26} (2016), 24-56.
         

\bibitem[BK17]{BK}
          \textsc{R. Bamler, B. Kleiner}:
          \textit{Uniqueness and Stability of Ricci flow through singularities.}
          ArXiv preprint: 1709.04122.

\bibitem[BBG94]{BerardBessonGallot}
          \textsc{P. B\'erard, G. Besson, S. Gallot}:
          \textit{Embedding Riemannian manifolds by their heat kernel},
          Geom. Funct. Anal. \textbf{4}(4) (1994), 373--398.


\bibitem[BS18]{BrueSemola}
          \textsc{E. Bru\'e, D. Semola}:
          \textit{Constancy of dimension for $\RCD^*(K,N)$ spaces via regularity of Lagrangian flows.}
          ArXiv preprint: 1803.04387 (2018), to appear in Comm. Pure and Appl. Math.


\bibitem[CM16]{CM}
         \textsc{F. Cavalletti, E. Milman}: \textit{The Globalization Theorem for the Curvature Dimension Condition}. 
         ArXiv preprint 1612.07623.

\bibitem[Ch99]{Cheeger}
         \textsc{J. Cheeger}: \textit{Differentiability of Lipschitz functions on metric measure spaces}. 
         Geom. Funct. Anal. \textbf{9} (1999), 428--517.


\bibitem[CC97, CC00a, CC00b]{CheegerColding1}
          \textsc{J. Cheeger, T. H. Colding}:
          \textit{On the structure of spaces with Ricci curvature bounded below. I, II, III.},  
         J. Differential Geom. \textbf{46}, \textbf{54}, \textbf{54} (1997, 2000, 2000), 406--480, 13--35, 37--74.


\bibitem[CN12]{CN}
          \textsc{T. H. Colding, A. Naber}:
          \textit{Sharp H\"older continuity of tangent cones for spaces with a lower Ricci curvature bound and applications.}  Ann. of Math. \textbf{176} (2012), 1173--1229.

\bibitem[DePhMR17]{DePhillippisMarcheseRindler}
         \textsc{G. De Philippis, A. Marchese, F. Rindler}:
         \textit{On a conjecture of Cheeger},
         in Measure Theory in Non-Smooth Spaces, 145-155, De Gruyter Open, Warsaw, 2017. 

\bibitem[DePhG18]{DePhillippisGigli}
         \textsc{G. De Philippis, N. Gigli}:
         \textit{Non-collapsed spaces with Ricci curvature bounded below},
         Journal de l'\'Ecole polytechnique, \textbf{5} (2018), 613--650.

\bibitem[EKS15]{ErbarKuwadaSturm}
        \textsc{M. Erbar, K. Kuwada, K.-T. Sturm}:
        \textit{On the equivalence of the entropic curvature-dimension condition and Bochner's inequality on metric measure spaces.}
        Invent. Math. \textbf{201} (2015), 993--1071. 


\bibitem[GM14]{GarofaloMondino}
        \textsc{N. Garofalo, A. Mondino}:
        \textit{Li-Yau and Harnack type inequalities in $\RCD^*(K, N)$ metric
measure spaces},
         Nonlinear Analysis {\bf 95} (2014), 721--734.

\bibitem[G13]{Gigli13}
        \textsc{N. Gigli}:
        \textit{The splitting theorem in non-smooth context.}
      ArXiv preprint 1302.5555.

\bibitem[G15a]{Gigli1}
        \textsc{N. Gigli}:
        \textit{On the differential structure of metric measure spaces and applications.}
      Mem. Amer. Math. Soc. \textbf{236} (2015), no. 1113.  
	
\bibitem[G18]{Gigli}
        \textsc{N. Gigli}:
        \textit{Nonsmooth differential geometry --
An approach tailored for spaces with Ricci curvature bounded from below.}
         Mem. Amer. Math. Soc. \textbf{251} (2018), no. 1196. 

\bibitem[GM14]{GM}
        \textsc{N. Gigli, C. Mantegazza}:
        \textit{A flow tangent to the Ricci flow via heat kernels and mass transport.}
      Adv. Math. \textbf{250} (2014), 74--104.


\bibitem[GMR15]{GigliMondinoRajala}
        \textsc{N. Gigli, A. Mondino, T. Rajala}:
        \textit{Euclidean spaces as weak tangents of
infinitesimally Hilbertian metric measure spaces
with Ricci curvature bounded below.}
J. reine angew. Math. \textbf{705} (2015), 233-244.  



\bibitem[GP16]{GigliPasqualetto}
        \textsc{N. Gigli, E. Pasqualetto}:
        \textit{Behaviour of the reference measure on $\RCD$ spaces under charts},
 ArXiv preprint: 1607.05188 (2016), to appear in  Comm. Anal. and
Geom.

\bibitem[HK00]{HK}
\textsc{P. Haj\l asz, P. Koskela},
\textit{Sobolev met Poincar\'e}. Mem. Amer. Math. Soc. \textbf{145} (2000), no. 688.

\bibitem[Han15]{Han2}
\textsc{B.-X. Han},
\textit{Conformal transformation on metric measure spaces}. Arxiv: 1511.03115v3, to appear in Potent. Anal.

\bibitem[Han18]{Han}
\textsc{B.-X. Han},
\textit{Ricci tensor on $\RCD^*(K, N)$ spaces}. J. Geom. Anal \textbf{28} (2018), 1295--1314.

\bibitem[Han19]{Han19}
\textsc{B.-X. Han},
\textit{Measure rigidity of synthetic lower Ricci curvature bound
on Riemannian manifolds}. ArXiv preprint 1902.00842v2.
        
\bibitem[Hon18]{Honda0}
        \textsc{S. Honda}:
        \textit{Bakry-\'Emery conditions on almost smooth metric meaasure space} Anal. Geome. Metr. Spaces. \textbf{6} (2018), 129--145.

\bibitem[HM19]{HM}
        \textsc{S. Honda, I. Mondello}:
        \textit{Sphere theorems for $\RCD$ and stratified spaces.} ArXiv preprint, 1907.03482v2.


\bibitem[J15]{Jiang15}
        \textsc{R. Jiang}:
        \textit{The Li-Yau Inequality and Heat Kernels on Metric Measure Spaces},
        J. Math. Pures Appl. {\bf 104} (2015), 29--57.

\bibitem[JLZ16]{JiangLiZhang}
        \textsc{R. Jiang, H. Li, and H.-C. Zhang}:
        \textit{Heat Kernel Bounds on Metric Measure Spaces
and Some Applications},
        Potent. Anal. \textbf{44} (2016), 601--627.

\bibitem[KK19]{KK}
        \textsc{V. Kapovitch, C. Ketterer}:
        \textit{Weakly noncollapsed RCD spaces with upper curvature bounds},
        ArXiv preprint: 1901.06966.

\bibitem[KM19]{KM}
        \textsc{V. Kapovitch, A. Mondino}:
        \textit{On the topology and the boundary of $N$-dimensional $\RCD(K,N)$ spaces},
        ArXiv preprint: 11907.02614v2.

\bibitem[KL17]{KL1}
        \textsc{B. Kleiner, J. Lott}:
        \textit{Singular Ricci flows I},
        Acta Math. \textbf{219} (2017), 65--134. 

\bibitem[KL18]{KL2}
        \textsc{B. Kleiner, J. Lott}:
        \textit{Singular Ricci flows II},
        ArXiv preprint: 1804.03265v2,  to appear, Birkhauser Progess in Mathematics series.

\bibitem[KM16]{KellMondino}
        \textsc{M. Kell, A. Mondino}:
        \textit{On the volume measure of non-smooth spaces with Ricci curvature bounded below}, to appear in Annali Sc. Norm. Sup.,
        ArXiv preprint: 1607.02036 (2016).

\bibitem[K17]{Kit1}
        \textsc{Y. Kitabeppu}:
        \textit{A Bishop-type inequality on metric measure spaces with Ricci curvature bounded below}, 
        Proc. AMS, \textbf{145} (2017), 3137--3151


\bibitem[LV09]{LottVillani}
\textsc{J. Lott, C. Villani}:
\textit{Ricci curvature for metric-measure spaces via optimal transport}. 
Ann. of Math. \textbf{169} (2009), 903--991.

\bibitem[MN19]{MondinoNaber}
        \textsc{A. Mondino, A. Naber}:
        \textit{Structure theory of metric measure spaces with lower Ricci curvature bounds}, J. Eur. Math. Soc. \textbf{21} (2019), 1809–-1854.
        
        
\bibitem[Raj12]{Rajala}
        \textsc{T. Rajala}:
        \textit{Local Poincar\'e inequalities from stable curvature conditions on metric spaces}.
        Calc. Var. Partial Differential Equations \textbf{44}(3) (2012), 477--494.
       
        

\bibitem[St95]{Sturm95}
	\textsc{K.-T. Sturm}:
	\textit{Analysis on local Dirichlet spaces. II. Upper Gaussian estimates for the fundamental
solutions of parabolic equations},
	 Osaka J. Math. \textbf{32}(2) (1995), 275--312.

\bibitem[St96]{Sturm96}
	\textsc{K.-T. Sturm}:
	\textit{Analysis on local Dirichlet spaces. III. The parabolic Harnack inequality},
	J. Math. Pures Appl. \textbf{75}(9) (1996), 273-297.
        
\bibitem[St06]{Sturm06}
	\textsc{K.-T. Sturm}:
	\textit{On the geometry of metric measure spaces, I and II}.
	Acta Math. \textbf{196} (2006), 65--131 and 133--177.

\bibitem[Vi09]{Villani}
{\sc C.~Villani}: {\it Optimal transport. Old and new}, vol.~338 of Grundlehren
  der Mathematischen Wissenschaften, Springer-Verlag, Berlin, 2009.

\bibitem[VR09]{VR}
{\sc M. -K. Von Renesse}: {\it On local Poincar\'e via transportation}, Math. Z., \textbf{259} (2008), 21--31.

\bibitem[W00]{Weaver}
       \textsc{N. Weaver}:
        \textit{Lipschitz algebras and derivations. II. Exterior differentiation}, 
        J. Funct. Anal. \textbf{178} (2000), 64--112. 

	
\end{thebibliography}
\end{document}